\documentclass[a4paper,12pt,reqno,oneside]{amsart}
\usepackage[utf8]{inputenc}
\usepackage{amsmath,amssymb}
\usepackage[numbers]{natbib}
\usepackage{bm}
\usepackage{eucal}
\usepackage{array}
\usepackage{booktabs}
\usepackage[font=small,labelfont=bf]{caption}
\usepackage{subfigure}
\usepackage{enumitem}
\usepackage{indentfirst}
\usepackage[a4paper,top=3cm,bottom=3cm,left=3cm,right=3cm,headsep=18pt]{geometry}
\usepackage{graphicx}
\usepackage{microtype}
\usepackage{randtext}
\usepackage{url}
\usepackage{hyperref}

\theoremstyle{plain}
\newtheorem{teo}{Theorem}[section]

\newtheorem{lem}{Lemma}[section]

\theoremstyle{definition}

\theoremstyle{remark}
\newtheorem*{obs}{Remark}

\numberwithin{equation}{section}
\newcolumntype{P}[2]{>{#1\arraybackslash}m{#2}}

\newlist{lteo}{enumerate}{1}
\setlist[lteo,1]{font=\upshape,label=(\roman*)}

\newlist{lprova}{enumerate}{1}
\setlist[lprova,1]{leftmargin=*,font=\upshape,label=(\roman*)}

\newcommand{\bbR}{\mathbb{R}}
\newcommand{\bbZ}{\mathbb{Z}}

\newcommand{\bbE}{\mathbb{E}}
\newcommand{\bbV}{\mathbb{V}}
\newcommand{\bbC}{\mathbb{C}}

\newcommand{\eps}{\varepsilon}
\newcommand{\V}{\text{Var}}
\newcommand{\Cov}{\text{Cov}}

\newcommand{\KK}{\mathcal{K}}
\newcommand{\vzero}{\mathbf{0}}
\newcommand{\cpr}{\stackrel{\scriptscriptstyle{\!P}}{\longrightarrow}}
\newcommand{\Ft}{\mathcal{F}_t}
\newcommand{\Gt}{\mathcal{F}_t}
\newcommand{\ocirc}{\mathbin{\rlap{$\mspace{2mu}\cdot$}\hbox{$\circ$}}}

\DeclareMathOperator{\Binomial}{Binomial}
\DeclareMathOperator{\EmpBox}{EmpBox}


\begin{document}

\title[Frog model on the complete graph]{Laws of large numbers for the frog model \texorpdfstring{\\}{} on the complete graph}

\author{Elcio Lebensztayn}
\address[E. Lebensztayn and M. A. Estrada]{Institute of Mathematics, Statistics and Scientific Computation\\
University of Campinas -- UNICAMP\\
Rua S\'{e}rgio Buarque de Holanda 651, 13083-859, Campinas, SP, Brazil.}
\thanks{Elcio Lebensztayn is thankful to the National Council for Scientific and Technological Development -- CNPq, and Mario Andr\'{e}s Estrada is thankful to the Coordination for the Improvement of Higher Education Personnel -- CAPES, for financial support.
Both authors are also grateful to the S\~ao Paulo Research Foundation -- FAPESP (grant 2017/10555-0).}

\author{Mario Andr\'{e}s Estrada}

\date{}

\subjclass[2010]{60K35, 60J10, 92B05}

\keywords{Frog model, Law of large numbers, random walks, complete graph.}

\begin{abstract}
The frog model is a stochastic model for the spreading of an epidemic on a graph, in which a dormant particle starts to perform a simple random walk on the graph and to awake other particles, once it becomes active.
We study two versions of the frog model on the complete graph with $N + 1$ vertices.
In the first version we consider, active particles have geometrically distributed lifetimes.
In the second version, the displacement of each awakened particle lasts until it hits a vertex already visited by the process.
For each model, we prove that as $N \to \infty$, the trajectory of the process is well approximated by a three-dimensional discrete-time dynamical system. 
We also study the long-term behavior of the corresponding deterministic systems.
\end{abstract}

\maketitle

\baselineskip=22pt

\section{Introduction}

We study stochastic systems whose agents are particles with random lifetimes, that move along the vertices of a finite graph.
These systems, referred to as \textit{frog models}, are commonly idealized with the motivation of modeling the spreading of a rumor or epidemic through a population.
Our main purpose is to prove limit theorems that allow us to identify the deterministic system (in discrete time) that approximates the trajectory of the process, when the graph size becomes large.

To define the models under study, for $N \geq 3$, let $\KK_{N+1}$ denote the complete graph with $N+1$ vertices.
At time zero, there is one particle at each vertex of $\KK_{N+1}$, all of them are sleeping, except for that placed at a fixed vertex of the graph.
Once wakened, particles perform independent simple random walks on $\KK_{N+1}$ in discrete time.
When a vertex with a sleeping particle is visited for the first time, that particle is activated and starts its own random walk.
However, each active particle has a random lifetime.
In the first model we consider, before each jump, active particles choose either to survive with probability $p$ or to disappear with probability $(1-p)$, independently of each other.
That is, upon being activated, a particle dies after a random number of jumps, having geometric distribution with parameter $(1-p)$.
We call this process the \textit{geometric model}.
In the second model we study, which we refer to as the \textit{nongeometric model}, each active particle survives up to the time it hits a vertex which has been visited before by the process.
In epidemic terms, the active particles are pathogenic agents, which move along the vertices of the graph (individuals).
Whenever a virus jumps onto a susceptible individual (an unvisited vertex), this individual becomes infected, and the virus duplicates.
In the geometric model, viruses have independent trajectories and lifetimes.
In the nongeometric model, once visited, a vertex activates an antivirus which kills every virus that tries to infect it in the future.
We study the fundamental question concerning how the proportion of visited vertices (infected individuals) evolves throughout the process.

Both systems of random walks on the complete graph are analyzed in \citet{RWSCG}.
For the geometric model, the authors prove that a critical parameter related to the final epidemic size equals $1/2$.
For the nongeometric model, the results are derived from computational analysis, simulations and mean field approximation.
In our work, motivated by the results and open questions presented there, we establish a law of large numbers which guarantees that the sample paths of the stochastic process stay close to the solution of a discrete dynamical system.
For the geometric model, the corresponding deterministic system is a discrete-time version of the classical epidemic model proposed by \citet{KKI,KKII}, whereas for the nongeometric model it is the dynamical system found by \citet{RWSCG} through a mean field approximation approach.
In addition, for both models, we study the long-term behavior of the associated deterministic systems.
The results are stated in Section~\ref{S: Main results}.

We point out that limit theorems of the type we derive are well-known for continuous-time Markovian processes; see \citet{Barbour}, \citet[Chapter~11]{EK} and \citet{DN}.
Regarding the frog model, \citet{EMCG} and \citet{EMIM} obtain limit theorems for the final outcome of different types of the continuous-time system on the complete graph.
\citet{MMM} deal with a discrete-time model also on the complete graph, in which a unique active particle is allowed to jump at each instant of time.
Differently from these three papers, in our models, several particles can jump simultaneously, so that the transitions may be abrupt.
With respect to other processes in discrete time, \citet{BP} prove limit theorems for a class of one-dimensional chain binomial models, and other models with similar characteristics.
As we will see in the sequel, the systems that we study are described by Markov chains taking values in $\bbZ^3$, whose general structures do not fulfill the conditions of the theorems stated in that paper
(since the functions that one uses to describe our density processes depend upon \(t\) and \(N\); see, in particular, Lemmas~\ref{L: G1}, \ref{L: G2}, \ref{L: NG1} and \ref{L: NG2}).
The techniques we develop here are inspired by the methods presented in \citet{BP}.

\subsection*{A brief overview on the frog model}

Besides the epidemic perspective, modeling combustion chemical reactions provides another interpretation for the frog model.
In this case, the systems are often formulated in continuous time; see, e.g., \citet{CQR}, \citet{RS}, and \citet{RS-IM}.

The model has been primarily studied on infinite graphs, such as the integer lattices~$\bbZ^d$, homogeneous trees and $d$-ary trees.
In the case when the frogs have geometrically distributed lifetimes, there exists a critical value of the parameter~$p$ below which the system dies out almost surely.
This critical phenomenon is an important topic of research; see \citet{PT}, \citet{FMS}, \citet{PTBT,NUB}, and references therein.
The issue of survival is also addressed for similar models on $\bbZ$ in \citet{BMZ} and \citet{RWSZ-JSP}.

When active frogs live forever, other fundamental questions are considered, such as:
(i) whether the root vertex of the graph is visited by frogs infinitely often; and
(ii) how the set of visited vertices grow and the cloud of particles move.
With respect to (i), \citet{TW} establish that the frog model starting with one particle per vertex is recurrent on $\bbZ^{d}$, for every $d \geq 1$.
A threshold result on $\bbZ^{d}$, with Bernoulli initial configuration, is derived by \citet{Serguei-FRE}.
Recurrence and transience on infinite trees was a prominent open question until recently.
\citet{HJJ-RT} establish that the frog model is recurrent on the binary tree, but it is transient on $d$-ary trees, for $d\geq 5$.
We also refer to \citet{HJJ-P}, \citet{JJ-CD} and \citet{Rosenberg-AT}.
The subjects in (ii) are investigated mainly for processes on $\bbZ^d$ (see, for instance, \citet{Shape,RS,HW}), and lately analyzed on $d$-ary trees (\citet{HJJ-IS}).

Having in mind the epidemic interpretation, it is natural to consider the frog model on finite graphs.
As we have already mentioned, \citet{RWSCG}, \citet{EMCG}, \citet{MMM} and \citet{EMIM} consider systems on the complete graph where active particles have a random lifetime, and study the fraction of visited vertices (infected individuals) by the process, as the size of the graph goes to $\infty$.
Also regarding the literature on finite graphs, one may assume that active frogs die after taking a fixed number~$\tau$ of jumps, and then obtain estimates for the smallest value $\tau$ could be, in order to guarantee that, with probability at least $1/2$, every vertex of the graph will be visited.
In the nice survey on the frog model written by \citet{Serguei-FIRW}, some results about this quantity are stated, for different types of graphs and initial configuration of one particle per vertex.
A related problem concerns the susceptibility of the graph, which is the minimal lifetime of the particles required for the process to visit all vertices, before dying out.
This variable is analyzed on finite $d$-ary trees by \citet{Hermon}, and on regular expanders and finite-dimensional tori by \citet{BFHM}.
Other statistic of interest is the cover time (time until all vertices are visited at least once, when active frogs have infinite lifetimes), recently studied on the complete graph (\citet{CDLLSY}) and on finite trees (\citet{Hermon}, \citet{HJJ-CT}).

\section{Main results}
\label{S: Main results}

\subsection{Geometric model}

For a realization of the frog model with geometric lifetimes on $\KK_{N+1}$, we define
\begin{equation*}
{\allowdisplaybreaks
\begin{aligned}
I_t &= \text{Number of unvisited vertices at time $t$}, \\
A_t &= \text{Number of actives particles at time $t$}, \\
D_t &= \text{Number of particles that have already died up to time $t$}.
\end{aligned}}%
\end{equation*}
Notice that the number of visited vertices at time $t$ is $V_t=N+1-I_t$, and the number of dead particles at time $t$ satisfies $D_t=V_t-A_t$.
Consequently, $I_t+A_t+D_t = N+1$ for every $t\geq0$.
For the simplicity of notation, we omit the dependence on $N$ of these random variables.

Of course, $\{(I_t, A_t, D_t)\}_{t \geq 0}$ is a discrete-time Markov chain (in fact, $\{(I_t, A_t)\}_{t \geq 0}$ is), such that $A_1=2$, $D_1=0$, and $I_1=N-1$ with probability $1$.
The following stochastic representation will be useful in the analysis of the process.
We think the actions of removing some particles and generating new ones as operations that occur at an intermediate instant between $t$ and $t+1$, and are registered only at time $t+1$.
Once the state of the process $(I_t,A_t,D_t)$ at time $t$ is known, we consider two auxiliary random variables $X_{t+1}$ and $Z_{t+1}$, distributed as
\begin{equation}
\label{F: Aux-G}
X_{t+1} \sim \Binomial(A_t, p) \quad \text{and} \quad
Z_{t+1} \sim \Binomial\left(X_{t+1}, \dfrac{I_t}{N}\right).
\end{equation}
The random variables $X_{t+1}$ and $Z_{t+1}$ stand respectively for the number of particles that survive, and the number of the survivor particles that decide to choose new vertices. 
Then, the state of the process at time $t+1$ is given by
\begin{equation}
\label{F: Trans-G}
{\allowdisplaybreaks
\begin{aligned}
I_{t+1} &\sim \EmpBox(Z_{t+1}, I_t), \\[0.1cm]
A_{t+1} &= X_{t+1}+I_t-I_{t+1}, \\[0.1cm]
D_{t+1} &= N+1-I_{t+1}-A_{t+1}.
\end{aligned}}%
\end{equation}
Here, $\EmpBox(Z_{t+1}, I_t)$ is the distribution of the number of empty urns when $Z_{t+1}$ balls are randomly distributed in $I_t$ urns, so that the difference $I_t-I_{t+1}$ represents the number of newly activated particles.
Further details on the $\EmpBox$ distribution are presented in Section~\ref{SS: Auxiliary results}.

Our first result establishes that, as $N \to \infty$, the stochastic process scaled by $N + 1$ behaves approximately as a discrete-time deterministic system.
For $t \geq 0$, we define
\begin{equation}
\label{F: VA}
\eta_t = (i_t,a_t,d_t) = 
\left(\frac{I_t}{N + 1}, \frac{A_t}{N + 1}, \frac{D_t}{N + 1}\right).
\end{equation}
Notice that, from the epidemic perspective, the fraction \(i_t\) corresponds to the proportion of individuals of the population that have not been infected up to time~$t$.
Now let $\xi_t = (\iota_t,\alpha_t,\delta_t)$ be three-dimensional dynamical system described by the equations
\begin{equation}
\label{F: SDG}
{\allowdisplaybreaks
\begin{aligned}
&\iota_{t+1} = \iota_t \, e^{-p \alpha_t},\\[0.2cm]
&\alpha_{t+1} = p \, \alpha_t+\iota_t \, (1-e^{-p\alpha_t}),\\[0.2cm]
&\delta_{t+1} = \delta_t+(1-p) \, \alpha_t,\\[0.2cm]
&\iota_0 = \dfrac{N}{N+1}, \; \alpha_0 = \dfrac{1}{N+1}, \; \delta_0 = 0.
\end{aligned}}%
\end{equation}
Again the dependence on $N$ is omitted from the notation, except in cases where it is necessary.
The system~\eqref{F: SDG} corresponds to a discrete-time Kermack--McKendrick model of epidemics, with removal and infectious rates both equal to $p$. 
We refer to \citet{KKI,KKII} and \citet[Chapter~9]{HP} for more details.

\begin{teo}
\label{T: LLN-G}
Consider the frog model on $\KK_{N+1}$ with geometric lifetimes.
For every $t\geq 0$, we have that
\begin{equation}
\label{F: LLN-G}
\eta_t - \xi_t \cpr \vzero = (0, 0, 0) \quad \text{as } N \to \infty,
\end{equation}
where $\cpr$ denotes convergence in probability.
\end{teo}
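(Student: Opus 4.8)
The plan is to argue by induction on $t$, exploiting the Markov structure of $\{(I_t,A_t,D_t)\}$ together with the explicit conditional law of the increments in~\eqref{F: Aux-G}--\eqref{F: Trans-G}. Write the deterministic update map $F=(F_1,F_2,F_3)$ associated with~\eqref{F: SDG}, namely $F(i,a,d)=\bigl(i\,e^{-pa},\,pa+i(1-e^{-pa}),\,d+(1-p)a\bigr)$, so that $\xi_{t+1}=F(\xi_t)$; note that $F$ is smooth, hence uniformly continuous on the compact cube $[0,1]^3$ where the states $\eta_t,\xi_t$ live. The base case is immediate: at $t=0$ one has $(I_0,A_0,D_0)=(N,1,0)$ deterministically, whence $\eta_0=\xi_0$. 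For the inductive step I assume $\eta_t-\xi_t\cpr\vzero$ and transfer this to time $t+1$ through two ingredients: (a) the scaled state at time $t+1$ concentrates around its conditional mean, which is close to $F(\eta_t)$; and (b) $F(\eta_t)$ is close to $F(\xi_t)=\xi_{t+1}$ by the induction hypothesis and the continuity of $F$.

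First I would compute the conditional first and second moments of the increments given $\Ft$ (this is the content of the auxiliary Lemmas~\ref{L: G1} and~\ref{L: G2}). Since $X_{t+1}\mid\Ft\sim\Binomial(A_t,p)$, the variable $X_{t+1}/(N+1)$ has conditional mean $p\,a_t$ and conditional variance at most $p(1-p)/(N+1)$. For the number of unvisited vertices, conditioning successively on $X_{t+1}$ and $Z_{t+1}$ and using the probability generating function of $Z_{t+1}$ yields the clean identity $\bbE[I_{t+1}\mid\Ft]=I_t\,(1-p/N)^{A_t}$; dividing by $N+1$ and using $(1-p/N)^{A_t}=e^{-pa_t}\bigl(1+O(1/N)\bigr)$ (a bound that is deterministic and uniform, since $0\le A_t\le N+1$) gives $\bbE[I_{t+1}/(N+1)\mid\Ft]=i_t\,e^{-pa_t}+O(1/N)=F_1(\eta_t)+O(1/N)$. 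A parallel computation, together with the negative association of the empty-urn indicators, bounds the conditional variance of $I_{t+1}/(N+1)$ by $C/N$ for a constant $C$; the crucial point here is that the empty-box variance, of order $I_t$, becomes negligible after the scaling by $(N+1)^2$.

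With these estimates the inductive step closes as follows. By Chebyshev's inequality applied conditionally on $\Ft$ and then taking expectations, $I_{t+1}/(N+1)-\bbE[I_{t+1}/(N+1)\mid\Ft]\cpr 0$, and likewise $X_{t+1}/(N+1)-p\,a_t\cpr 0$; combined with the moment formulas above this gives $I_{t+1}/(N+1)-F_1(\eta_t)\cpr 0$. Since $F_1$ is uniformly continuous on $[0,1]^3$, the induction hypothesis $\eta_t-\xi_t\cpr\vzero$ yields $F_1(\eta_t)-F_1(\xi_t)\cpr 0$, and $F_1(\xi_t)=\iota_{t+1}$; therefore $i_{t+1}-\iota_{t+1}\cpr 0$. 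For the active particles I use the exact relation $A_{t+1}/(N+1)=X_{t+1}/(N+1)+i_t-i_{t+1}$ and the convergences just obtained, noting that $\alpha_{t+1}=p\alpha_t+\iota_t-\iota_{t+1}$, to conclude $a_{t+1}-\alpha_{t+1}\cpr 0$. Finally, the conservation laws $i_t+a_t+d_t=1$ and $\iota_t+\alpha_t+\delta_t=1$ (the latter preserved by $F$ and valid at $t=0$) reduce the third coordinate to $d_{t+1}-\delta_{t+1}=-(i_{t+1}-\iota_{t+1})-(a_{t+1}-\alpha_{t+1})\cpr 0$, completing the induction and hence the proof.

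I expect the main obstacle to be the analysis of the empty-box transition $I_{t+1}\sim\EmpBox(Z_{t+1},I_t)$: one must control both its conditional mean and variance when the number of urns $I_t$ and the number of balls $Z_{t+1}$ are themselves random, and must make the replacement of $(1-1/I_t)^{Z_{t+1}}$ (equivalently of $(1-p/N)^{A_t}$) by $e^{-pa_t}$ uniform in the random state. A deterministic growth bound such as $A_t\le 2^t$ (which holds because the number of newly activated particles in one step never exceeds the current number of active particles) is convenient here, since it keeps $I_t$ of order $N$ and all increments bounded, making the above error terms genuinely uniform. The second delicate point, more conceptual than computational, is the passage from the conditional (given $\Ft$) estimates to unconditional convergence in probability; this is handled by the induction hypothesis and by the uniform continuity of $F$ on the compact cube $[0,1]^3$, which lets small deviations of $\eta_t$ from $\xi_t$ propagate to small deviations of $F(\eta_t)$ from $\xi_{t+1}$.
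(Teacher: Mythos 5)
Your proposal is correct and follows essentially the same route as the paper: induction on $t$, explicit conditional first and second moments of the $\EmpBox$/binomial transitions (the paper's Lemmas~\ref{L: G1} and~\ref{L: G2}), concentration of $\eta_{t+1}$ around its conditional mean via a Chebyshev-type bound, and transfer of the induction hypothesis through the (Lipschitz/uniformly continuous) deterministic update map. Your packaging differs only cosmetically — you absorb the replacement of $(1-p/N)^{A_t}$ by $e^{-pa_t}$ into a uniform $O(1/N)$ error and then invoke continuity of the limit map $F$, while the paper works with the $N$-dependent map $\psi_{N,p}$ and its Lipschitz constant and uses the total-variance decomposition with dominated convergence instead of conditional Chebyshev — but the substance is identical.
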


Theorem~\ref{T: LLN-G} is proved by mathematical induction on $t$.
Roughly speaking, we have that the conditional expected value of the next state of the scaled stochastic process given the current state behaves approximately as the corresponding state of the deterministic model.
Consequently, if the stochastic and the deterministic systems are close to each other at a given instant of time $t$, then the same occurs at time $t+1$.
The proof is inspired by the techniques of \citet{BP}, whose results can not be directly applied to our models.

Now we examine the long-term behavior of the sequence $\{\xi_t\} = \{\xi_t^{(N)}\}$ given by the deterministic system~\eqref{F: SDG}, first as $t$ increases and then as $N$ increases.
Let $\phi: (0, 1) \rightarrow (0, \infty)$ be the function given by
\[ \phi(p) = \dfrac{p}{1-p}, \, p \in (0, 1). \]
Let $W_0$ denote the principal branch of the so-called Lambert $W$ function (which is the multivalued inverse of the function $ x \mapsto x \, e^x $).
More details about this function can be found in \citet{CGHJK}.

\begin{teo}
\label{T: ABG}
For every $p \in (0, 1)$ and $N \geq 3$, the following limits exist:
\[ \iota_{\infty}^{(N)} = \displaystyle\lim_{t\to \infty} \iota_t^{(N)}, \,
\delta_{\infty}^{(N)} = \displaystyle\lim_{t\to \infty} \delta_t^{(N)}, \text{ and } \,
\displaystyle\lim_{t\to \infty} \alpha_t^{(N)} = 0, \]
with $\iota_{\infty}^{(N)} + \delta_{\infty}^{(N)} = 1$.
Furthermore, there exists $\iota_{\infty}=\displaystyle\lim_{N\to \infty}\iota_{\infty}^{(N)}$, which is given by
\begin{equation*}
\iota_{\infty}=
\left\{
\begin{array}{cl}
1 &\text{if } p\leq 1/2,\\[0.2cm]
-\dfrac{1}{\phi(p)} \, W_0(-\phi(p) \, e^{-\phi(p)}) &\text{if } p> 1/2.
\end{array}	\right.
\end{equation*}
\end{teo}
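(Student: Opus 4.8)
The plan is to reduce the long-run analysis to a single scalar invariant and then to study that invariant as a function of $N$ and $p$.

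\emph{Step 1 (conservation, monotonicity, existence of the $t$-limits).} Fix $N$ and $p$. First I would check by induction that $\iota_t,\alpha_t,\delta_t\ge 0$ for all $t$: each recursion in \eqref{F: SDG} preserves nonnegativity, since $1-e^{-p\alpha_t}\ge0$ whenever $\alpha_t\ge0$. Adding the three equations gives $\iota_{t+1}+\alpha_{t+1}+\delta_{t+1}=\iota_t+\alpha_t+\delta_t$, so $\iota_t+\alpha_t+\delta_t\equiv\iota_0+\alpha_0+\delta_0=1$. Because $\alpha_t\ge0$ forces $e^{-p\alpha_t}\le1$, the sequence $\{\iota_t\}$ is non-increasing and bounded below by $0$, while $\{\delta_t\}$ is non-decreasing and bounded above by $1$; hence $\iota_\infty^{(N)}$ and $\delta_\infty^{(N)}$ exist. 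Since $\alpha_t=1-\iota_t-\delta_t$, the sequence $\{\alpha_t\}$ converges as well, and from $\delta_{t+1}-\delta_t=(1-p)\alpha_t$ together with the convergence of $\{\delta_t\}$ the increments tend to $0$; as $p<1$ this yields $\lim_{t\to\infty}\alpha_t^{(N)}=0$. Letting $t\to\infty$ in the conservation law gives $\iota_\infty^{(N)}+\delta_\infty^{(N)}=1$.

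\emph{Step 2 (the invariant).} Next I would combine the first and third equations of \eqref{F: SDG}. Writing $\iota_t=\iota_0\exp(-p\sum_{s<t}\alpha_s)>0$ and taking logarithms gives $\ln\iota_{t+1}-\ln\iota_t=-p\alpha_t=-\phi(p)(\delta_{t+1}-\delta_t)$. Telescoping and letting $t\to\infty$ (both sides converge) produces the identity
\[ \ln\iota_\infty^{(N)}-\ln\iota_0=-\phi(p)\,\bigl(1-\iota_\infty^{(N)}\bigr), \]
where I used $\delta_\infty^{(N)}=1-\iota_\infty^{(N)}$ and $\delta_0=0$. In particular $\iota_\infty^{(N)}>0$, and since $\{\iota_t\}$ is non-increasing with $\iota_0=N/(N+1)<1$ one also has $\iota_\infty^{(N)}<1$.

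\emph{Step 3 (root location and the limit in $N$).} I would then recast the invariant as $g(\iota_\infty^{(N)})=c_N-\phi(p)$, where $g(x)=\ln x-\phi(p)x$ and $c_N=\ln\bigl(N/(N+1)\bigr)$ increases to $0$. As $g'(x)=1/x-\phi(p)$ vanishes only at $x^\ast=1/\phi(p)$, the behaviour splits at $\phi(p)=1$, i.e.\ at $p=1/2$. For $p\le1/2$ we have $\phi(p)\le1$ and $g$ is strictly increasing on $(0,1]$ with $g(1)=-\phi(p)$, so $\iota_\infty^{(N)}$ is the unique root in $(0,1)$; being the inverse image under an increasing bijection of the increasing sequence $c_N-\phi(p)$, it is non-decreasing in $N$ and converges to the solution of $g(x)=-\phi(p)$ on $(0,1]$, namely $x=1$. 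For $p>1/2$ we have $\phi(p)>1$ and $x^\ast<1$; here $g$ increases on $(0,x^\ast)$ and decreases on $(x^\ast,\infty)$, and since $c_N-\phi(p)<-\phi(p)=g(1)$ the equation has exactly two roots, one in $(0,x^\ast)$ and one in $(1,\infty)$. The a priori bound $\iota_\infty^{(N)}<1$ forces $\iota_\infty^{(N)}$ to be the root in $(0,x^\ast)$; again by monotonicity of $g$ on that branch, $\iota_\infty^{(N)}$ increases in $N$ to the solution $x_0\in(0,x^\ast)$ of $g(x)=-\phi(p)$.

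\emph{Step 4 (Lambert $W$) and the main obstacle.} Finally, for $p>1/2$ I would identify $x_0$: exponentiating $\ln x_0=\phi(p)(x_0-1)$ and setting $w=-\phi(p)x_0$ gives $w\,e^{w}=-\phi(p)e^{-\phi(p)}$. Because $x_0<x^\ast=1/\phi(p)$ translates into $w>-1$, the relevant branch is the principal one, whence $w=W_0(-\phi(p)e^{-\phi(p)})$ and $x_0=-\phi(p)^{-1}W_0(-\phi(p)e^{-\phi(p)})$, matching the stated formula. The hard part will be Step~3 together with this branch selection: one must argue that, for $p>1/2$, the monotone limit $\iota_\infty^{(N)}$ is the \emph{smaller} root of $g(x)=-\phi(p)$ rather than the fixed point $x=1$, using only the bound $\iota_\infty^{(N)}<1$, and then match it to the correct ($W_0$) branch of the Lambert function; this root/branch selection is precisely where the threshold $p=1/2$ emerges.
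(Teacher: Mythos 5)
Your proposal is correct, and Steps 1--2 coincide with the paper's argument: the same conservation law and monotonicity give the $t$-limits and $\alpha_t\to 0$, and the same telescoping of $\log\iota_t$ against $\delta_t$ yields the invariant $\iota_t=\iota_0\exp\{-\phi(p)\,\delta_t\}$ and hence the transcendental equation for $\iota_{\infty}^{(N)}$. Where you genuinely diverge is the branch selection for $p>1/2$. The paper keeps the fixed-point form $x=\tau^{(N)}(x)$, passes to the limit function $\tau(x)=e^{-\phi(p)(1-x)}$ by uniform convergence (so $\iota_{\infty}$ is only known to be \emph{some} fixed point of $\tau$), and must then exclude the fixed point $x=1$ by a quantitative estimate: the inequality $-x(1+x)/(1-x)\le\log(1-x)$ applied at $x=2p-1$ gives $\tau^{(N)}(2(1-p))<2(1-p)$, whence $\iota_{\infty}^{(N)}<2(1-p)$ and $\iota_{\infty}\le 2(1-p)<1$. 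You instead analyze the finite-$N$ equation $g(x)=c_N-\phi(p)$ with $g(x)=\ln x-\phi(p)x$ directly: $g$ is unimodal with maximum at $x^{\ast}=1/\phi(p)<1$ and satisfies $g(x)\ge g(1)=-\phi(p)>c_N-\phi(p)$ on $[x^{\ast},1]$, so the two roots are separated into $(0,x^{\ast})$ and $(1,\infty)$, and the trivial bound $\iota_{\infty}^{(N)}<1$ (from $\iota_0<1$ and monotonicity of $\{\iota_t\}$) already places $\iota_{\infty}^{(N)}$ on the lower branch; monotone convergence along that increasing branch of $g$ then identifies the limit, and $w=-\phi(p)x_0>-1$ correctly selects the principal branch $W_0$. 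Your route avoids the paper's ad hoc logarithmic inequality and makes the threshold at $\phi(p)=1$ emerge from the location of the critical point $x^{\ast}$; the paper's route, in exchange, yields the explicit bound $\iota_{\infty}\le 2(1-p)$ as a byproduct.
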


The graph of $\iota_{\infty}$ as a function of $p$ is presented in Figure~\ref{Fig: IFinal}.

\vspace{0.2cm}

\begin{figure}[!htb]
\centering
\includegraphics[scale=0.6]{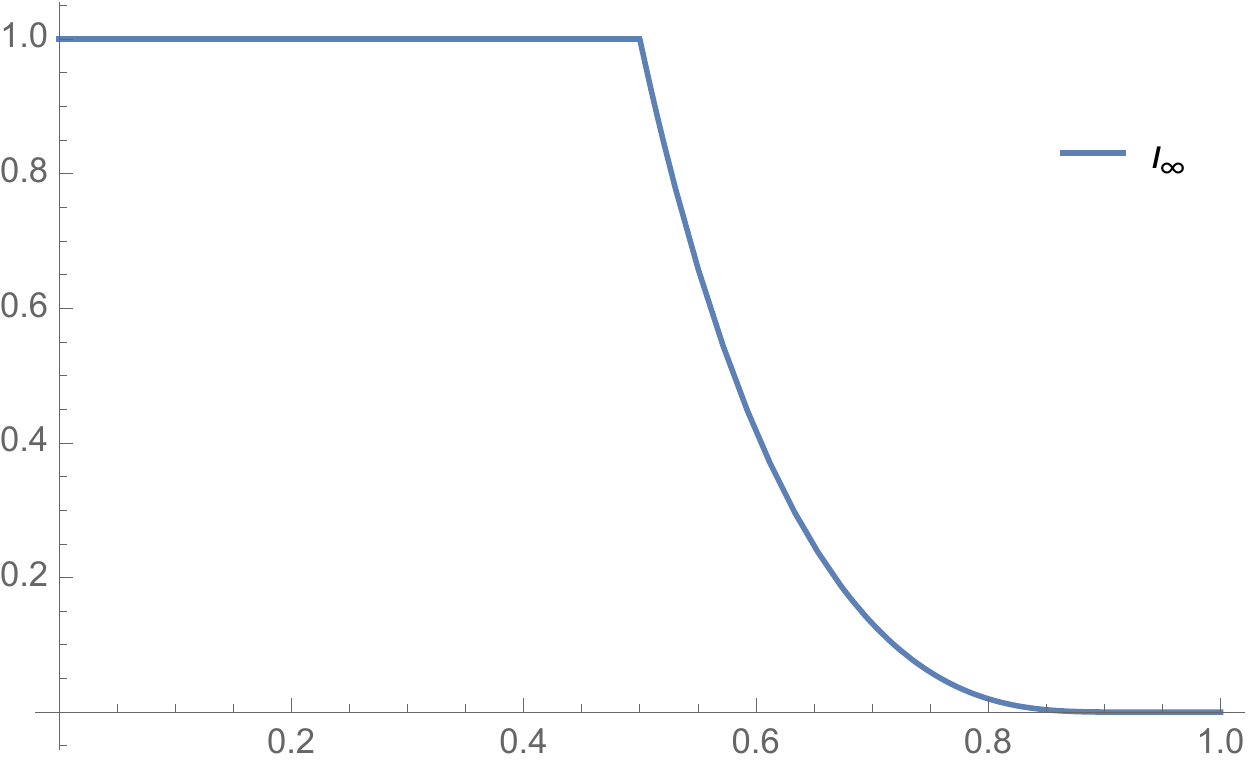}
\caption{Function $\iota_{\infty} = \iota_{\infty}(p)$.}
\label{Fig: IFinal}
\end{figure}

\begin{obs}
For a fixed $p\leq 1/2$, $\iota_{\infty}=1$ is the unique fixed point of the function
\begin{equation}
\label{F: Func}
\tau(x) = \exp\{-\phi(p)(1-x)\}
\end{equation}
in the closed interval $[0, 1]$.
On the other hand, if $p>1/2$, then $\tau$ has two fixed points in $[0, 1]$, and $\iota_{\infty}$ is the fixed point strictly less than~$1$.
The cases $p=0.4$ and $p= 0.6$ are illustrated in Figure~\ref{Fig: Func}.
\end{obs}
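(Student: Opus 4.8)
The plan is to fix $N$ first and analyze the system~\eqref{F: SDG} as $t \to \infty$, and only afterwards let $N \to \infty$. The key preliminary observation is a conservation law: summing the three recursions gives $\iota_{t+1} + \alpha_{t+1} + \delta_{t+1} = \iota_t e^{-p\alpha_t} + p\alpha_t + \iota_t(1-e^{-p\alpha_t}) + \delta_t + (1-p)\alpha_t = \iota_t + \alpha_t + \delta_t$, so that $\iota_t + \alpha_t + \delta_t = \iota_0 + \alpha_0 + \delta_0 = 1$ for every $t$. A straightforward induction also yields $\iota_t > 0$ and $\alpha_t \ge 0$. Since $e^{-p\alpha_t} \le 1$, the first recursion shows that $\{\iota_t\}$ is nonincreasing and bounded below by $0$, hence convergent to some $\iota_\infty^{(N)} \ge 0$; likewise $\delta_{t+1} - \delta_t = (1-p)\alpha_t \ge 0$ shows that $\{\delta_t\}$ is nondecreasing and bounded above by $1$, hence convergent to some $\delta_\infty^{(N)}$. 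By the conservation law $\alpha_t$ converges as well, and since $\delta_{t+1} - \delta_t = (1-p)\alpha_t \to 0$ with $p < 1$, we get $\alpha_t \to 0$. Passing to the limit in the conservation law then gives $\iota_\infty^{(N)} + \delta_\infty^{(N)} = 1$, which settles the first part.

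For the explicit value I would combine the first and third recursions, observing that $\log\iota_{t+1} - \log\iota_t = -p\alpha_t = -\phi(p)\,(\delta_{t+1}-\delta_t)$ because $\phi(p)(1-p) = p$. Telescoping from $0$ and using $\delta_0 = 0$ produces the clean identity $\iota_t = \iota_0\, e^{-\phi(p)\delta_t}$. Letting $t \to \infty$ and substituting $\delta_\infty^{(N)} = 1 - \iota_\infty^{(N)}$ shows that $\iota_\infty^{(N)}$ solves
\[ x = \frac{N}{N+1}\, e^{-\phi(p)(1-x)}, \qquad\text{equivalently}\qquad x\,e^{-\phi(p)x} = \frac{N}{N+1}\,e^{-\phi(p)}. \]
I would then study $g(x) = x\,e^{-\phi(p)x}$ on $[0,1]$, whose only critical point is $x = 1/\phi(p)$. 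When $p \le 1/2$ (so $\phi(p) \le 1$ and $1/\phi(p) \ge 1$), $g$ is strictly increasing on $[0,1]$, the equation has a single root, and since $N/(N+1) \to 1$ this root converges to $g^{-1}(e^{-\phi(p)}) = 1$, giving $\iota_\infty = 1$. When $p > 1/2$ (so $\phi(p) > 1$), $g$ rises on $[0,1/\phi(p)]$ and falls on $[1/\phi(p),1]$, and one checks $\phi(p)e^{-\phi(p)} < e^{-1}$ so that the horizontal level $\frac{N}{N+1}e^{-\phi(p)}$ lies strictly below the maximum; thus the equation has exactly two roots $x_1^{(N)} < 1/\phi(p) < x_2^{(N)}$.

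The main obstacle is to decide, for $p > 1/2$, which of the two roots equals $\iota_\infty^{(N)}$. The difficulty is that the recursion for $\iota_t$ is coupled to $\alpha_t$ and is not a scalar iteration of the map $x \mapsto \frac{N}{N+1}e^{-\phi(p)(1-x)}$, so one cannot simply invoke stability of a fixed point. I would argue by contradiction: if $\iota_\infty^{(N)} > 1/\phi(p)$, then $\iota_t \ge \iota_\infty^{(N)} > 1/\phi(p)$ for every $t$, and inserting the elementary bound $1 - e^{-y} \ge y - y^2/2$ (valid for $y \ge 0$) into the second recursion gives $\alpha_{t+1} \ge \alpha_t\,[\,p(1+\iota_t) - \iota_t\,p^2\alpha_t/2\,]$. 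Since $p\bigl(1 + 1/\phi(p)\bigr) = 1$, the hypothesis $\iota_\infty^{(N)} > 1/\phi(p)$ forces $p(1+\iota_t) \to p(1+\iota_\infty^{(N)}) > 1$, while $\iota_t\,p^2\alpha_t/2 \to 0$; hence the bracket exceeds $1$ for all large $t$, making $\alpha_t$ grow geometrically and contradicting $\alpha_t \to 0$. Therefore $\iota_\infty^{(N)} \le 1/\phi(p)$, which identifies $\iota_\infty^{(N)} = x_1^{(N)}$.

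It remains to take $N \to \infty$. The left branch of $g$ is continuous and strictly increasing on $[0,1/\phi(p)]$, so $x_1^{(N)} = g_{\mathrm{left}}^{-1}\!\bigl(\tfrac{N}{N+1}e^{-\phi(p)}\bigr) \to g_{\mathrm{left}}^{-1}(e^{-\phi(p)}) = x_1$, the smaller root of $x\,e^{-\phi(p)x} = e^{-\phi(p)}$. Substituting $u = -\phi(p)\,x$ turns this equation into $u\,e^{u} = -\phi(p)\,e^{-\phi(p)}$, and because $x_1 \in (0,1/\phi(p))$ corresponds to $u \in (-1,0)$, the value is selected by the principal branch, yielding $\iota_\infty = x_1 = -\frac{1}{\phi(p)}\,W_0\bigl(-\phi(p)\,e^{-\phi(p)}\bigr)$, as claimed.
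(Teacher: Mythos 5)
Your proposal is correct, and for the existence of the limits, the telescoped identity $\iota_t = \iota_0\, e^{-\phi(p)\delta_t}$, and the reduction to a fixed-point equation, it coincides with the paper's argument. Where you genuinely diverge is in selecting the correct root when $p>1/2$. The paper works with $\tau^{(N)}(x)=\frac{N}{N+1}e^{-\phi(p)(1-x)}$ and uses that this convex function has a \emph{unique} fixed point in $[0,1]$ (it exceeds the identity at $0$ and lies below it at $1$), so no selection problem arises at finite $N$; the only question is whether the limit as $N\to\infty$ is $1$ or the smaller fixed point of $\tau$, and this is settled by the inequality $-x(1+x)/(1-x)\le \log(1-x)$ with $x=2p-1$, which yields the uniform bound $\iota_{\infty}^{(N)}<2(1-p)$. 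You instead rule out the large root by a dynamical instability argument: if $\iota_{\infty}^{(N)}>1/\phi(p)$, then $\alpha_{t+1}\ge \alpha_t\bigl[p(1+\iota_t)-\iota_t p^2\alpha_t/2\bigr]$ with $p(1+\iota_t)$ bounded below by a constant exceeding $1$, forcing geometric growth of $\alpha_t$ and contradicting $\alpha_t\to 0$. This is valid and rather instructive (it explains why the trajectory cannot stall at the unstable root), and your passage to the limit via continuity of the inverse of the increasing branch of $g(x)=x\,e^{-\phi(p)x}$ is also fine and bypasses the paper's monotonicity-in-$N$ step.

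One slip to correct: for finite $N$ and $p>1/2$ you assert that $g(x)=\frac{N}{N+1}e^{-\phi(p)}$ has exactly two roots in $[0,1]$. It has exactly one: on the decreasing branch $[1/\phi(p),1]$ one has $g(x)\ge g(1)=e^{-\phi(p)}>\frac{N}{N+1}e^{-\phi(p)}$, so the second root lies strictly to the right of $1$. (Two fixed points in $[0,1]$ occur only for the limiting function $\tau$, where the level equals $g(1)$ exactly --- which is precisely the content of the Remark.) The miscount is harmless, since your instability argument identifies the root on the increasing branch in any case --- and indeed, once you know there is only one root in $[0,1]$, that whole argument becomes unnecessary at finite $N$ --- but the claim as written is false and should be repaired.
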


\begin{figure}[!htb]
\centering
\subfigure[$p=0.4$]{\includegraphics[width=0.4\textwidth]{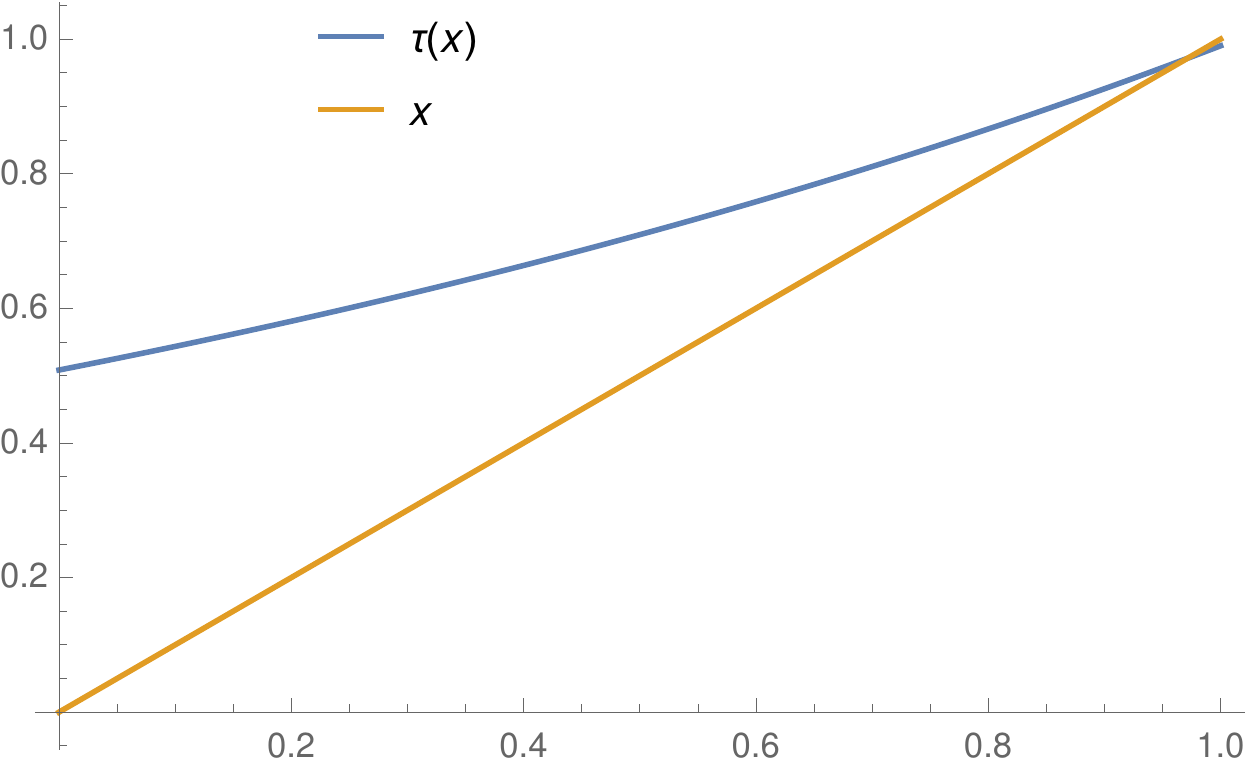}\label{Fig: Func-a}}
\qquad
\subfigure[$p=0.6$]{\includegraphics[width=0.4\textwidth]{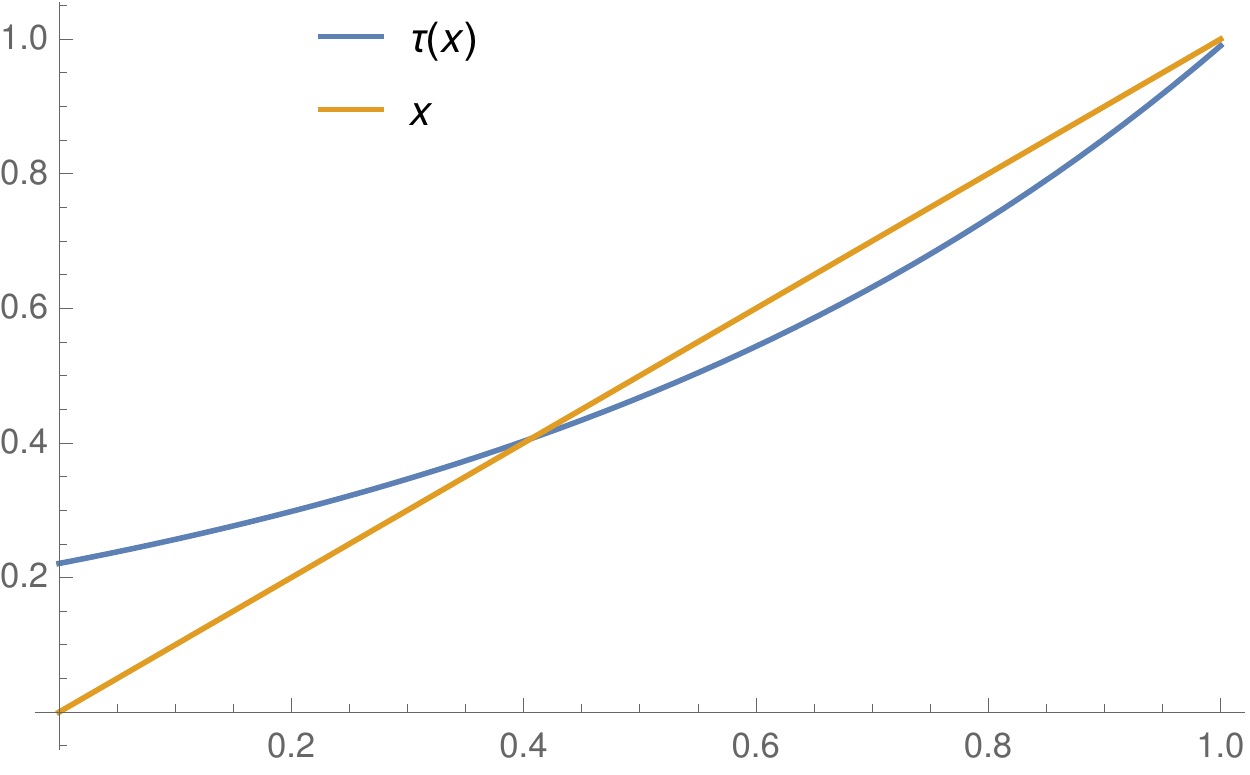}\label{Fig: Func-b}}
\caption{Behavior of the function $\tau$.}
\label{Fig: Func}
\end{figure}

The limiting behavior of the deterministic system \eqref{F: SDG} exhibited in Theorem~\ref{T: ABG} is in agreement with the result proved by \citet{RWSCG} for the model with geometric lifetimes.
Indeed, this result states that the critical value of $p$ below which the final proportion of visited vertices converges in distribution to zero equals $1/2$.

The proofs of Theorems~\ref{T: LLN-G} and \ref{T: ABG} are presented respectively in Sections~\ref{SS: Proof LLN-G} and \ref{SS: Proof ABG}.

\subsection{Nongeometric model}
\label{SS: NGM}

For the model with nongeometric lifetimes, we adopt the same notation as before:
$I_t$ is the number of unvisited vertices, $A_t$ is the number of actives particles, and $D_t$ is the number of dead particles at time $t$.
The state of the process (scaled by $N+1$) at time $t$ is summarized by the random vector $\eta_t$, as defined in~\eqref{F: VA}.

We also use a stochastic representation to describe how the process evolves.
Here every active particle always jumps, surviving only if it reaches an unvisited vertex.
Hence, given the state of the process $(I_t,A_t,D_t)$ at time $t$, we consider a single auxiliary random variable
\begin{equation}
\label{F: Aux-NG}
Z_{t+1}\sim \Binomial\left(A_t, \dfrac{I_t}{N}\right),
\end{equation}
which represents the number of active particles that choose new vertices and survive.
Consequently, the state of the process at time $t+1$ is given by
\begin{equation}
\label{F: Trans-NG}
{\allowdisplaybreaks
\begin{aligned}
I_{t+1} &\sim \EmpBox(Z_{t+1}, I_t), \\[0.1cm]
A_{t+1} &= Z_{t+1}+I_t-I_{t+1}, \\[0.1cm]
D_{t+1} &= N+1-I_{t+1}-A_{t+1}.
\end{aligned}}%
\end{equation}
The essence of the two stochastic systems is the set of random variables that help their mathematical modeling. 
Notice, however, that there are fundamental differences between them.
In the geometric model, each active particle determines its survival through the toss of a coin, which is described by the random variable $X_{t+1}$.
In both models, the random variable $Z_{t+1}$ represents the number of particles that choose jumping to unvisited vertices, but for the geometric model the underlying binomial distribution applies to a thinner group of particles (modeled by $X_{t+1}$).
Finally, the most important difference lies in the formula for $A_{t+1}$: for the geometric model, it has as addends $X_{t+1}$ (number of survivor particles) and $I_t-I_{t+1}$ (number of new particles introduced in the system), whereas for the nongeometric model the first addend is $Z_{t+1}$.

For the nongeometric model, we denote the deterministic counterpart of $\eta_t$ by $\tilde{\xi}_t = (\tilde{\iota}_t,\tilde{\alpha}_t,\tilde{\delta}_t)$, which is given by the following discrete-time dynamical system:
\begin{equation}
\label{F: SDNG}
{\allowdisplaybreaks
\begin{aligned}
&\tilde{\iota}_{t+1}=\tilde{\iota}_t \, e^{-\tilde{\alpha}_t},\\[0.2cm]
&\tilde{\alpha}_{t+1}=\tilde{\iota}_t \, (\tilde{\alpha}_t+1-e^{-\tilde{\alpha}_t}),\\[0.2cm]
&\tilde{\delta}_{t+1}=\tilde{\delta}_t+\tilde{\alpha}_t\left(1-\tilde{\iota}_t\right),\\[0.2cm]
&\tilde{\iota}_0=\dfrac{N}{N+1}, \; \tilde{\alpha}_0=\dfrac{1}{N+1}, \; \tilde{\delta}_0=0.
\end{aligned}}%
\end{equation}
This dynamical system is derived in \citet[Section~3.3]{RWSCG} for the nongeometric model, through a mean field approximation approach. 
In this paper, the authors underline the remarkable resemblance in the evolution of the stochastic and the deterministic systems, for various values of the degree of the graph (Figures~$1$ and $2$ in the paper).
This is fully justified by the law of large numbers for the trajectory of the process, which we now establish.

\begin{teo}
\label{T: LLN-NG}
Consider the frog model on $\KK_{N+1}$ with nongeometric lifetimes.
For every $t\geq 0$, we have that
\[ \eta_t - \tilde{\xi}_t \cpr \vzero \quad \text{as } N \to \infty. \]
\end{teo}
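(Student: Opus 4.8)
The plan is to argue by induction on $t$, exactly as for Theorem~\ref{T: LLN-G}. The base case is trivial: the initial data in~\eqref{F: SDNG} are defined so that $\eta_0 = \tilde{\xi}_0$ as a deterministic identity (both equal $(N/(N+1),\,1/(N+1),\,0)$), whence $\eta_0 - \tilde{\xi}_0 = \vzero$. A preliminary reduction simplifies the bookkeeping. For the stochastic system one always has $I_t + A_t + D_t = N+1$, while a one-line induction using~\eqref{F: SDNG} shows that $\tilde{\iota}_t + \tilde{\alpha}_t + \tilde{\delta}_t = 1$ for every $t$. Since $d_t - \tilde{\delta}_t = (\tilde{\iota}_t - i_t) + (\tilde{\alpha}_t - a_t)$, it therefore suffices to prove that the first two coordinates converge, and the third follows for free.

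For the inductive step, I assume $(i_t, a_t) - (\tilde{\iota}_t, \tilde{\alpha}_t) \cpr (0,0)$ and split
\begin{equation*}
i_{t+1} - \tilde{\iota}_{t+1} = \bigl(i_{t+1} - \bbE[i_{t+1} \mid \Ft]\bigr) + \bigl(\bbE[i_{t+1} \mid \Ft] - \tilde{\iota}_{t+1}\bigr),
\end{equation*}
with the analogous decomposition for $a_{t+1}$. The proof then comes down to showing that each bracket vanishes in probability as $N \to \infty$.

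I treat the bias bracket first. Exploiting that $Z_{t+1} \sim \Binomial(A_t, I_t/N)$, the probability generating function of the binomial, and the mean of the $\EmpBox$ distribution recorded in Section~\ref{SS: Auxiliary results}, a direct computation yields
\begin{equation*}
\bbE[I_{t+1} \mid \Ft] = I_t \Bigl(1 - \tfrac{1}{N}\Bigr)^{A_t}, \qquad \bbE[A_{t+1} \mid \Ft] = \frac{A_t I_t}{N} + I_t - I_t \Bigl(1 - \tfrac{1}{N}\Bigr)^{A_t}.
\end{equation*}
After dividing by $N+1$ and substituting $A_t = (N+1) a_t$, the crucial observation is that $a \mapsto (1-1/N)^{(N+1)a}$ converges to $a \mapsto e^{-a}$ uniformly on $[0,1]$. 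Because the limiting maps $(i,a) \mapsto i\, e^{-a}$ and $(i,a) \mapsto i(a + 1 - e^{-a})$ are Lipschitz on the compact square $[0,1]^2$, the bias bracket is dominated by a constant multiple of $|i_t - \tilde{\iota}_t| + |a_t - \tilde{\alpha}_t|$ plus a deterministic uniform error tending to $0$; the induction hypothesis then gives $\bbE[i_{t+1} \mid \Ft] - \tilde{\iota}_{t+1} \cpr 0$ and $\bbE[a_{t+1} \mid \Ft] - \tilde{\alpha}_{t+1} \cpr 0$. I would stress that both $\tilde{\xi}_t$ and the update maps depend on $N$, which is why a Lipschitz estimate is used rather than a plain continuous-mapping argument — this is the structural feature that prevents a direct appeal to \citet{BP}.

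For the fluctuation bracket, the plan is to bound conditional variances and invoke Chebyshev's inequality conditionally on $\Ft$. The $\EmpBox$ law is what makes this delicate. Applying the law of total variance to the compound mechanism $Z_{t+1} \mapsto I_{t+1}$, writing the number of empty urns as a sum of negatively correlated indicators, and controlling the resulting second factorial moment, one obtains $\V[I_{t+1} \mid \Ft] = O(N)$ uniformly over the bounded state; since $A_{t+1}$ equals $Z_{t+1} - I_{t+1}$ plus the $\Ft$-measurable quantity $I_t$ and $\V[Z_{t+1}\mid\Ft] \le A_t/4 = O(N)$, the same bound holds for $\V[A_{t+1} \mid \Ft]$. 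Consequently the scaled conditional variances are $O(1/N)$, and conditional Chebyshev followed by taking expectations yields $i_{t+1} - \bbE[i_{t+1} \mid \Ft] \cpr 0$ and $a_{t+1} - \bbE[a_{t+1} \mid \Ft] \cpr 0$. Summing the two brackets closes the induction. I expect the genuine obstacle to be this variance estimate for the nested $\Binomial$--$\EmpBox$ composition: the negative correlation of the empty-urn indicators and a tight comparison of $(1 - 2/I_t)^{Z_{t+1}}$ with $(1-1/I_t)^{2 Z_{t+1}}$ must be combined to keep everything at order $N$, and these moment bounds are exactly what Lemmas~\ref{L: NG1} and~\ref{L: NG2} would package.
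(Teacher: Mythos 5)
Your proposal is correct and follows essentially the same route as the paper's proof: induction on $t$, exact conditional first moments for $I_{t+1}$ and $A_{t+1}$ computed from the generating function of $Z_{t+1}$ and the mean of the $\EmpBox$ law (the paper's Lemma~\ref{L: NG1}), a Lipschitz/uniform-convergence estimate for the $N$-dependent update maps to control the bias (Lemma~\ref{L: NG3}), and $O(N)$ conditional variance bounds so that the scaled variances vanish (Lemmas~\ref{L: NG2} and~\ref{L: NG4}). The only cosmetic differences are that you split $i_{t+1}-\tilde{\iota}_{t+1}$ into a fluctuation term plus a bias term and apply conditional Chebyshev, where the paper combines the same two estimates through the total-variance decomposition, dominated convergence and Markov's inequality, and that you recover the $d$-coordinate from the constraint $i_t+a_t+d_t=1$ whereas the paper recovers the $a$-coordinate.
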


The analysis of the deterministic system~\eqref{F: SDNG} is more difficult than that of~\eqref{F: SDG}, mainly due to the presence of the variable $\tilde{\iota}_t$ in place of $p$, in the third equation.
For the sake of completeness, we summarize the main results for this system derived by~\citet[Theorem~3.2]{RWSCG}, adapting to our notation.
The authors prove that, for every fixed $N$, the sequence $\{ \tilde{\alpha}_t \}$ follows the
pattern
\[ \tilde{\alpha}_0 < \tilde{\alpha}_1 < \cdots < \tilde{\alpha}_{M-1} \leq \tilde{\alpha}_M
> \tilde{\alpha}_{M+1} > \tilde{\alpha}_{M+2} > \cdots, \]
for some $M = M(N)$, with $ \displaystyle\lim_{t \to \infty} \tilde{\alpha}_t = 0 $.
In addition, they establish the existence of the following limits:
\[ \tilde{\iota}_{\infty}^{(N)} = \displaystyle\lim_{t \to \infty} \tilde{\iota}_t \in (0.17, 0.18) 
\quad \text{and} \quad
\tilde{\delta}_{\infty}^{(N)} = \displaystyle\lim_{t \to \infty} \tilde{\delta}_t \in (0.82, 0.83). \]
To study the sequence $\{\tilde{\iota}_{\infty}^{(N)}\}$, we use a mathematical software to run the deterministic system for various values of $N$, until it stabilizes.
Then, we construct a plot of the limiting value of the sequence $\{\tilde{\iota}_t^{(N)}\}$ for large $t$, against $N$.
The plot is shown in Figure~\ref{Fig: ANG}.
This suggests that the sequence $\{\tilde{\iota}_{\infty}^{(N)}\}$ is increasing in $N$, and tends to the value $0.174545$ as $N \to \infty$.
Thus, it is natural to conjecture that, for the nongeometric model, the final proportion of visited vertices converges in probability to $0.825455$ as $N \to \infty$.

\begin{figure}[!htb]
\begin{center}
\includegraphics[width=0.75\textwidth]{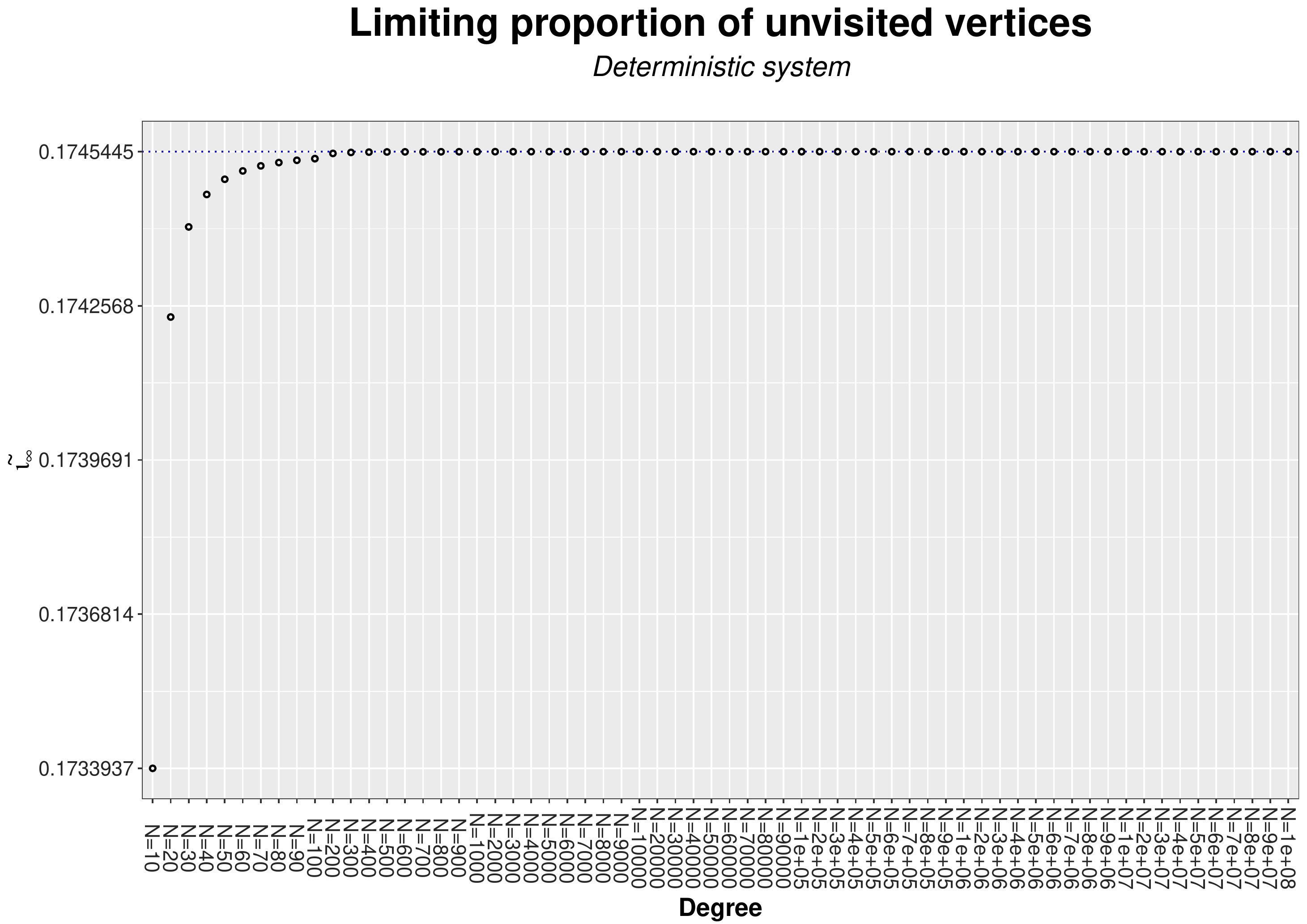}
\caption{Limiting values of the deterministic trajectory $\tilde{\iota}_t^{(N)}$ 
for different values of $N$.}
\label{Fig: ANG}
\end{center}
\end{figure}

\begin{obs}
There is a close relationship between the frog model with nongeometric lifetimes and stochastic models for the transmission of a rumor within a closed finite population.
Roughly speaking, we have the following correspondence:
\begin{lprova}
\item an inactive particle being awakened $=$ an ignorant individual becoming a spreader after hearing the rumor from a spreader;
\item an active particle jumping to a visited vertex $=$ a spreader contacting somebody who has already heard the rumor (another spreader or a stifler individual).
\end{lprova}
\citet{RWRT} present results on the asymptotic behavior and a joint construction of a continuous-time frog model on the complete graph and a general version of the stochastic rumor model introduced by~\citet{MT}.
Using similar ideas, we conclude that, except by a minor modification, our frog model with nongeometric lifetimes on $\KK_{N+1}$ corresponds to a discrete-time stochastic Maki--Thompson model, in which at each instant of time all spreaders simultaneously make contact with someone else in the population.
In this analogy, $I_t$, $A_t$ and $D_t$ represent respectively the number of ignorant, spreader and stifler individuals at time~$t$.
We point out that, in the original Maki--Thompson model, just one spreader makes a contact at each step.
For the classical model, \citet{Sudbury} proved that, as the population size tends to $\infty$, the ultimate proportion of ignorants converges in probability to the number $-W_0(- 2 \, e^{-2})/2 \approx 0.203188$.
The previous numerical analysis indicates that, for the Maki--Thompson model in which all spreaders act simultaneously, the limiting fraction of ignorants in the population would be equal to $0.174545$.
For more on the definitions and limit theorems for stochastic rumour models, we refer to \citet[Chapter~5]{DG}, \citet{LDPMT}, and \citet{LTRM}.
\end{obs}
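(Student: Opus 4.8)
The statement is a correspondence claim, so the plan is to verify it by exhibiting the precise discrete-time rumor model and checking that its one-step transition kernel coincides with that of the frog model recorded in \eqref{F: Trans-NG}. First I would fix the population to be the $N+1$ vertices of $\KK_{N+1}$, declaring a vertex \emph{ignorant}, \emph{spreader} or \emph{stifler} according as its home particle is asleep (unvisited), active, or dead; this identification makes $I_t$, $A_t$ and $D_t$ the numbers of ignorants, spreaders and stiflers, as asserted. The Maki--Thompson variant to be compared against is the following: at each step every one of the $A_t$ spreaders independently selects a target uniformly among the other $N$ individuals; a spreader that selects an ignorant remains a spreader, a spreader that selects a non-ignorant becomes a stifler, and every ignorant selected by at least one spreader becomes a spreader, all updates being applied simultaneously.

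The core of the argument is to show that this rumor dynamics produces exactly the conditional law \eqref{F: Trans-NG}. Since each spreader's target is uniform over $N$ individuals of which $I_t$ are ignorant, the indicator that a given spreader selects an ignorant is $\Bernoulli(I_t/N)$, and these indicators are independent across the $A_t$ spreaders because the targets are chosen independently; hence the number of spreaders that select ignorants is $Z_{t+1} \sim \Binomial(A_t, I_t/N)$, matching \eqref{F: Aux-NG}. Next, conditionally on $Z_{t+1}$, each successful contact is uniform over the $I_t$ ignorants and these are mutually independent, so the recruited ignorants are obtained by throwing $Z_{t+1}$ balls into $I_t$ urns. The number of ignorants that remain is therefore the number of empty urns, $I_{t+1} \sim \EmpBox(Z_{t+1}, I_t)$, and the number of newly recruited spreaders is $I_t - I_{t+1}$, exactly as in the first line of \eqref{F: Trans-NG}.

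It then remains to reconcile the bookkeeping of spreaders and stiflers. The spreaders present at time $t+1$ are the $Z_{t+1}$ survivors together with the $I_t - I_{t+1}$ new recruits, giving $A_{t+1} = Z_{t+1} + I_t - I_{t+1}$; the new stiflers are the $A_t - Z_{t+1}$ spreaders that contacted a non-ignorant, so $D_{t+1} = D_t + (A_t - Z_{t+1})$, which coincides with $D_{t+1} = (N+1) - I_{t+1} - A_{t+1}$ upon using $I_t + A_t + D_t = N+1$. As the initial states agree ($I_0 = N$, $A_0 = 1$, $D_0 = 0$ in both), the two Markov chains have identical transition kernels and hence the same law; in particular the limiting ignorant fraction of this simultaneous Maki--Thompson model is governed by the very system \eqref{F: SDNG}, consistent with the numerical value $0.174545$.

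Finally I would make explicit the minor modification. In $\KK_{N+1}$ an active particle jumps from its current (roaming) position, so the target it \emph{excludes} is the visited vertex it presently occupies rather than its own home vertex, whereas in the classical Maki--Thompson model a spreader excludes itself. Both excluded vertices are non-ignorant, so in either convention a spreader sees exactly $I_t$ ignorants and $N - I_t$ non-ignorants among its $N$ admissible targets; consequently the conditional law of $(I_{t+1}, A_{t+1}, D_{t+1})$ is unaffected, and the discrepancy reduces to the identity of the particular non-ignorant that is excluded, which never enters the counts. The main obstacle is precisely to argue this last point rigorously: one must check that the $\EmpBox$ structure is insensitive to whether the excluded non-ignorant is the home or the current vertex, i.e. that the uniformity and independence of the successful contacts over the ignorant set is preserved, so that the coupling of the count processes is exact even though the particle-level trajectories differ.
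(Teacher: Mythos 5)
Your proposal is correct, and it is in fact more explicit than anything the paper itself offers: the statement is a Remark, and the authors justify it only by analogy, citing the joint construction of \citet{RWRT} for the \emph{continuous-time} frog model and the Maki--Thompson model and asserting that ``similar ideas'' yield the discrete-time correspondence. Your route is a self-contained comparison of transition kernels: identify each vertex's status (ignorant/spreader/stifler) with the state of its home particle (asleep/active/dead), write down the one-step law of the simultaneous Maki--Thompson dynamics, and check that it reproduces exactly \eqref{F: Aux-NG}--\eqref{F: Trans-NG}, namely $Z_{t+1}\sim\Binomial(A_t, I_t/N)$, then $I_{t+1}\sim\EmpBox(Z_{t+1}, I_t)$, with $A_{t+1}=Z_{t+1}+I_t-I_{t+1}$ and the stifler count forced by $I_t+A_t+D_t=N+1$; since the initial states agree, the two Markov chains are equal in law. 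This also pins down concretely what the paper leaves vague as a ``minor modification'': a roaming particle excludes its \emph{current} (visited) vertex, while a Maki--Thompson spreader excludes \emph{itself}; because both excluded vertices are non-ignorant, every contact sees the same $I_t$ ignorants among its $N$ admissible targets and, conditionally on success, lands uniformly on that same ignorant set, independently across contacts, so the law of the count process is unaffected. What the citation-based approach of \citet{RWRT} buys is a pathwise (trajectory-level) correspondence, useful for transferring almost-sure statements; what yours buys is an elementary distributional identity, which is all that the Remark's conclusions require --- the identification of $I_t$, $A_t$, $D_t$ and the transfer of Theorem~\ref{T: LLN-NG} and the numerical limit $0.174545$ of \eqref{F: SDNG} to the rumor model. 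One terminological caveat: what you establish is equality in distribution of the count chains, not a coupling (you never place the two processes on a common probability space), so the word ``coupling'' in your last paragraph should be replaced accordingly.
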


\section{Proofs}

\subsection{Auxiliary results}
\label{SS: Auxiliary results}

We present here some definitions and results that are useful in the proofs of Theorems~\ref{T: LLN-G} and \ref{T: LLN-NG}.

We start off with the classical occupancy problem, referring the interested reader to \citet[Section~4 of Chapter~10]{JKK} for more details.
Consider the random placement of $b$ balls into $c$ boxes, in such a way that the balls are thrown independently and uniformly into the boxes.
Let $X$ denote the number of empty boxes after the balls have been distributed.
Then the probability mass function of $X$ is given by
\begin{equation*}
P(X = x) = \sum_{i=0}^{c-x}(-1)^{i}\binom{x+i}{i}\binom{c}{x+i}
\left(1 - \dfrac{x+i}{c}\right)^b,
\, x = 0, 1, \dots, c.
\end{equation*}
We write $X \sim \EmpBox(b, c)$.
In the sequel, we will use the formulas for the expectation and the variance of $X$, which are given by
{\allowdisplaybreaks
\begin{align}
E(X) &= c \left(\frac{c-1}{c}\right)^b, \, \text{ and}\label{F: EspEB}\\[0.1cm]
\V(X) &= c \, (c-1) \left(\frac{c-2}{c}\right)^b + 
c \left(\frac{c-1}{c}\right)^b - 
c^{2} \left(\frac{c-1}{c}\right)^{2 b}.\label{F: VarEB}
\end{align}}%

In the proofs, we will also use the following well-known result about the probability generating function of the binomial distribution.
Let $Y$ be a random variable with $\Binomial(n,p)$ distribution, $n \geq 1$, $p \in (0, 1)$.
Then, for every $s \in \bbR$,
{\allowdisplaybreaks
\begin{align}
E(s^Y) &= \left[1-p(1-s)\right]^n, \, \text{ and}\label{F: FGP-Bin1}\\[0.1cm]
E(Y s^Y) &= n p s \left[1-p(1-s)\right]^{n-1}.\label{F: FGP-Bin2}
\end{align}}%


\subsection{Proof of Theorem~\ref{T: LLN-G}}
\label{SS: Proof LLN-G}

The proof of \eqref{F: LLN-G} proceeds by mathematical induction on $t$.
Let
\begin{equation*}
\Ft = \sigma(\{\left(I_s, A_s, D_s\right): 0 \leq s \leq t \}), \, t \geq 0,
\end{equation*}
be the natural filtration generated by the process $\{(I_t, A_t, D_t)\}_{t \geq 0}$.
In words, $\Ft$ represents all the information available from the process at time $t$.
For $t \geq 0$, let us define
$\bbE(\cdot)=E(\,\cdot\,|\Ft)$, 
$\bbV(\cdot)=\V(\,\cdot\,|\Ft)$, and
$\bbC(\cdot,\ocirc)=\Cov(\,\cdot\,,\,\ocirc\,|\Ft)$.

In the first step of the proof of Theorem~\ref{T: LLN-G}, we write down closed formulas for the conditional expectation and the conditional variance of the next state of the Markov chain $\{(I_t, A_t, D_t)\}$, given the current state:
{\allowdisplaybreaks
\begin{align*}
\bbE\left((I_{t+1}, A_{t+1}, D_{t+1})\right) &:= \left(\bbE(I_{t+1}), \bbE(A_{t+1}), \bbE(D_{t+1})\right), 
\, \text{ and}\\[0.2cm]
\bbV\left((I_{t+1}, A_{t+1}, D_{t+1})\right) &:= \left(\bbV(I_{t+1}), \bbV(A_{t+1}), \bbV(D_{t+1})\right).
\end{align*}}%
For these computations, we use the expressions for the expected value and variance of the $\EmpBox$ distribution, and the results for the binomial distribution, which are presented in Section~\ref{SS: Auxiliary results}.

In the second step of the proof, we assume that \eqref{F: LLN-G} holds true for some $t \geq 0$, and prove that
\begin{equation}
\label{F: BHI}
\bbE(\eta_{t+1})-\xi_{t+1} \cpr \vzero \quad \text{and} \quad
\bbV(\eta_{t+1}) \cpr \vzero \quad \text{as } N \to \infty.
\end{equation}
That is, under the induction hypothesis, we have that the conditional expected value of the next state of the scaled process given the current state is well approximated by the corresponding state of the deterministic model, and the vector of conditional variances is close to $\vzero$.
Once \eqref{F: BHI} is established, we show the inductive step by using fundamental results of probability theory.
The two steps just described are formalized in Lemmas~\ref{L: G1} to~\ref{L: G4}, which are stated in the sequel.

\begin{lem}
\label{L: G1}
For every $t \geq 0$, the conditional expectations of $I_{t+1}$, $A_{t+1}$ and $D_{t+1}$ given $\Ft$ are, respectively,
{\allowdisplaybreaks
\begin{align*}
\bbE(I_{t+1})&= I_t\left(1-\frac{p}{N}\right)^{A_t},\\[0.2cm]
\bbE(A_{t+1})&= p A_t+I_t\left(1-\left(1-\frac{p}{N}\right)^{A_t}\right),\\[0.2cm]
\bbE(D_{t+1})&= D_t+(1-p)A_t.
\end{align*}}%
\end{lem}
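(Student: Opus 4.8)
The plan is to compute $\bbE(I_{t+1})$ directly by peeling off the three-layer hierarchy of conditional distributions in \eqref{F: Aux-G}--\eqref{F: Trans-G}, and then to obtain $\bbE(A_{t+1})$ and $\bbE(D_{t+1})$ by linearity. First I would condition on $Z_{t+1}$: since $I_{t+1}\sim\EmpBox(Z_{t+1},I_t)$, the occupancy mean \eqref{F: EspEB} with $b=Z_{t+1}$ and $c=I_t$ gives
\begin{equation*}
E(I_{t+1}\mid Z_{t+1},\Ft)=I_t\left(1-\frac{1}{I_t}\right)^{Z_{t+1}},
\end{equation*}
so that by the tower property $\bbE(I_{t+1})=I_t\,\bbE\bigl[(1-1/I_t)^{Z_{t+1}}\bigr]$. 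The task thus reduces to evaluating a nested probability generating function at the point $s=1-1/I_t$.

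Next I would collapse the two binomial layers using \eqref{F: FGP-Bin1}. Conditionally on $X_{t+1}$ we have $Z_{t+1}\sim\Binomial(X_{t+1},I_t/N)$, so \eqref{F: FGP-Bin1} with $n=X_{t+1}$, success probability $I_t/N$ and $s=1-1/I_t$ yields
\begin{equation*}
E\Bigl[(1-1/I_t)^{Z_{t+1}}\,\big|\,X_{t+1},\Ft\Bigr]
=\Bigl[1-\tfrac{I_t}{N}\bigl(1-(1-1/I_t)\bigr)\Bigr]^{X_{t+1}}
=\left(1-\frac{1}{N}\right)^{X_{t+1}},
\end{equation*}
the key cancellation being $\frac{I_t}{N}\cdot\frac{1}{I_t}=\frac{1}{N}$, which makes the dependence on $I_t$ disappear. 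Applying \eqref{F: FGP-Bin1} once more to $X_{t+1}\sim\Binomial(A_t,p)$ at $s=1-1/N$ gives $\bbE[(1-1/N)^{X_{t+1}}]=(1-p/N)^{A_t}$, and hence $\bbE(I_{t+1})=I_t(1-p/N)^{A_t}$, as claimed.

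The remaining two identities follow without further distributional work. Since $A_{t+1}=X_{t+1}+I_t-I_{t+1}$ with $I_t$ being $\Ft$-measurable and $\bbE(X_{t+1})=pA_t$ (binomial mean), linearity gives
\begin{equation*}
\bbE(A_{t+1})=pA_t+I_t-\bbE(I_{t+1})=pA_t+I_t\bigl(1-(1-p/N)^{A_t}\bigr).
\end{equation*}
Finally, from $D_{t+1}=N+1-I_{t+1}-A_{t+1}$ I would take conditional expectations and substitute the values just found, noting that $\bbE(I_{t+1})+\bbE(A_{t+1})=I_t+pA_t$; combined with the conservation law $I_t+A_t+D_t=N+1$ this yields $\bbE(D_{t+1})=N+1-I_t-pA_t=D_t+(1-p)A_t$.

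I expect the only delicate point to be the careful ordering of the conditioning — $\EmpBox$ on $Z_{t+1}$, then binomial on $X_{t+1}$, then binomial on $A_t$ — together with the verification that each generating-function evaluation telescopes cleanly into the next (this is exactly where the cancellation $\frac{I_t}{N}\cdot\frac{1}{I_t}=\frac{1}{N}$ is essential). Everything else is routine linearity and the conservation identity, so there is no substantive obstacle beyond this bookkeeping.
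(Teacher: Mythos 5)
Your proposal is correct and follows essentially the same route as the paper: both unwind the hierarchy $\EmpBox(Z_{t+1},I_t)$, $Z_{t+1}\mid X_{t+1}$, $X_{t+1}\mid\Ft$ via the occupancy mean \eqref{F: EspEB} and the binomial generating function \eqref{F: FGP-Bin1}, then get $\bbE(A_{t+1})$ and $\bbE(D_{t+1})$ by linearity and the conservation identity. The only cosmetic difference is that the paper first assembles the full conditional pgf $\bbE(s^{Z_{t+1}})=\bigl(1-\tfrac{pI_t}{N}(1-s)\bigr)^{A_t}$ before evaluating at $s=1-1/I_t$, whereas you evaluate at that point layer by layer; the computations coincide.
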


\begin{proof}
First, it follows from~\eqref{F: Aux-G} and \eqref{F: FGP-Bin1} that, for every $s \in \bbR$,
\begin{equation*}
\bbE(s^{Z_{t+1}}|X_{t+1}) = {E}(s^{Z_{t+1}}|X_{t+1},\Ft)
= \left(1-\dfrac{I_t}{N}(1-s)\right)^{X_{t+1}},
\end{equation*}
whence, taking again the expected value,
\begin{equation}
\label{F: FGM-Z-G}
\bbE(s^{Z_{t+1}}) = \left(1-\dfrac{p \, I_t}{N}(1-s)\right)^{A_t}.
\end{equation}
Now recalling~\eqref{F: Trans-G} and applying~\eqref{F: EspEB}, we obtain that
\begin{equation*}
\bbE(I_{t+1}|Z_{t+1}) = I_t\left(\dfrac{I_t-1}{I_t}\right)^{Z_{t+1}}.
\end{equation*}
Therefore,
\begin{equation*}
\bbE(I_{t+1}) = I_t\left(1-\dfrac{p \, I_t}{N}\cdot\dfrac{1}{I_t}\right)^{A_t}
= I_t\left(1-\dfrac{p}{N}\right)^{A_t}.
\end{equation*}
Since $A_{t+1} = X_{t+1}+I_t-I_{t+1}$ and $D_{t+1} = N+1-A_{t+1}-I_{t+1}$, we have that
{\allowdisplaybreaks
\begin{align*}
\bbE(A_{t+1}) &= pA_t+I_t\left(1-\left(1-\dfrac{p}{N}\right)^{A_t}\right), \, \text{ and}\\[0.2cm]
\bbE(D_{t+1}) &= N+1-pA_t-I_t=D_t+(1-p)A_t,
\end{align*}}%
as desired.
\end{proof}

\begin{lem}
\label{L: G2}
For every $t \geq 0$, the conditional variances of $I_{t+1}$, $D_{t+1}$ and $A_{t+1}$ given~$\Ft$ are, respectively,
{\allowdisplaybreaks
\begin{align*}
\bbV(I_{t+1})&= I_t \, \biggl(\left(I_t-1\right)\left(1-\frac{2p}{N}\right)^{A_t}-I_t\left(1-\frac{p}{N}\right)^{2A_t}+\left(1-\frac{p}{N}\right)^{A_t}\biggr),\\[0.2cm]
\bbV(D_{t+1})&= A_t \, p(1-p),\\[0.1cm]
\bbV(A_{t+1})&= \bbV(I_{t+1})+\bbV(D_{t+1}) + 2 \, p \, A_t \, I_t \left(1-\frac{p}{N}\right)^{A_t}\left(\frac{1-p}{N-p}\right).
\end{align*}}%
\end{lem}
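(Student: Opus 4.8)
The plan is to exploit the hierarchical conditioning structure $X_{t+1}\to Z_{t+1}\to I_{t+1}$ underlying \eqref{F: Aux-G}--\eqref{F: Trans-G}, computing each conditional variance via the law of total variance together with the moment formulas \eqref{F: EspEB}, \eqref{F: VarEB} for the $\EmpBox$ law and the generating-function identities \eqref{F: FGP-Bin1}, \eqref{F: FGP-Bin2} for the binomial. The three variances are of increasing difficulty, and I would treat $D_{t+1}$ first, then $I_{t+1}$, and finally $A_{t+1}$, which requires a covariance computation.

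For $\bbV(D_{t+1})$ the key observation is algebraic: combining $A_{t+1}=X_{t+1}+I_t-I_{t+1}$ with $D_{t+1}=N+1-I_{t+1}-A_{t+1}$ gives $D_{t+1}=N+1-I_t-X_{t+1}=D_t+A_t-X_{t+1}$. Since $D_t$ and $A_t$ are $\Ft$-measurable, $\bbV(D_{t+1})=\bbV(X_{t+1})=A_t\,p(1-p)$, the variance of the $\Binomial(A_t,p)$ variable in \eqref{F: Aux-G}. For $\bbV(I_{t+1})$ I would apply \eqref{F: VarEB} with $b=Z_{t+1}$, $c=I_t$ to obtain $\V(I_{t+1}\mid Z_{t+1})$, and note that adding the square of the conditional mean from \eqref{F: EspEB} cancels the term $-I_t^2((I_t-1)/I_t)^{2Z_{t+1}}$, leaving $\bbE(I_{t+1}^2\mid Z_{t+1})=I_t(I_t-1)((I_t-2)/I_t)^{Z_{t+1}}+I_t((I_t-1)/I_t)^{Z_{t+1}}$. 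Taking expectations then reduces to evaluating $\bbE(s^{Z_{t+1}})$ from \eqref{F: FGM-Z-G} at $s=(I_t-2)/I_t$ and $s=(I_t-1)/I_t$, which yield $(1-2p/N)^{A_t}$ and $(1-p/N)^{A_t}$ respectively; subtracting $(\bbE(I_{t+1}))^2=I_t^2(1-p/N)^{2A_t}$ from Lemma~\ref{L: G1} and factoring out $I_t$ produces the stated formula.

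The heart of the lemma, and the step I expect to be the main obstacle, is $\bbV(A_{t+1})$. Writing $A_{t+1}=X_{t+1}+I_t-I_{t+1}$ and recalling $\bbV(X_{t+1})=\bbV(D_{t+1})$, I get $\bbV(A_{t+1})=\bbV(I_{t+1})+\bbV(D_{t+1})-2\,\bbC(I_{t+1},X_{t+1})$, so everything hinges on the covariance. The delicate point is that $I_{t+1}$ depends on $X_{t+1}$ only through $Z_{t+1}$: conditionally on $Z_{t+1}$ the variables $I_{t+1}$ and $X_{t+1}$ are independent, so $\bbE(I_{t+1}X_{t+1}\mid Z_{t+1},X_{t+1})=X_{t+1}\,I_t((I_t-1)/I_t)^{Z_{t+1}}$ by \eqref{F: EspEB}. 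Conditioning next on $X_{t+1}$ and using the binomial generating function collapses $((I_t-1)/I_t)^{Z_{t+1}}$ to $((N-1)/N)^{X_{t+1}}$, after which \eqref{F: FGP-Bin2} evaluated at $s=(N-1)/N$ gives $\bbE(I_{t+1}X_{t+1})=I_t\,A_t\,p\,\tfrac{N-1}{N}(1-p/N)^{A_t-1}$.

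Subtracting $\bbE(I_{t+1})\bbE(X_{t+1})$ from Lemma~\ref{L: G1}, the bracketed factor simplifies to $(p-1)/N$, and rewriting $(1-p/N)^{A_t-1}=\tfrac{N}{N-p}(1-p/N)^{A_t}$ produces $\bbC(I_{t+1},X_{t+1})=-p\,A_t\,I_t(1-p/N)^{A_t}\tfrac{1-p}{N-p}$; the minus sign flips in $-2\,\bbC(I_{t+1},X_{t+1})$ to match the claimed expression. The bookkeeping of the two-stage conditioning and the careful cancellation in the covariance are the only genuine subtleties; the remainder is a direct application of the auxiliary formulas of Section~\ref{SS: Auxiliary results}.
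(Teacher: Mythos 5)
Your proposal is correct and follows essentially the same route as the paper: the identity $D_{t+1}=D_t+A_t-X_{t+1}$ for the variance of $D_{t+1}$, the law of total variance (your direct second-moment computation is the same calculation, with the uncomputable term $I_t^2\,E\bigl(((I_t-1)/I_t)^{2Z_{t+1}}\mid\Ft\bigr)$ cancelling in both versions) combined with the generating function \eqref{F: FGM-Z-G} for $\bbV(I_{t+1})$, and the decomposition $\bbV(A_{t+1})=\bbV(I_{t+1})+\bbV(D_{t+1})-2\,\bbC(I_{t+1},X_{t+1})$ with the covariance evaluated by the two-stage conditioning $\bbE(I_{t+1}\mid X_{t+1})=I_t(1-1/N)^{X_{t+1}}$ and \eqref{F: FGP-Bin2}. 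All the intermediate quantities you identify match the paper's, so no gap to report.
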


\begin{proof}
From~\eqref{F: EspEB} and \eqref{F: VarEB}, we have that
\begin{gather*}
\bbE\left(I_{t+1}|Z_{t+1}\right) = I_t\left(\dfrac{I_t-1}{I_t}\right)^{Z_{t+1}} \, \text{ and}\\[0.2cm]
\V(I_{t+1}|Z_{t+1},\Ft) = I_t(I_t-1)\left(\dfrac{I_t-2}{I_t}\right)^{Z_{t+1}}+I_t\left(\dfrac{I_t-1}{I_t}\right)^{Z_{t+1}}-I_t^2\left(\dfrac{I_t-1}{I_t}\right)^{2Z_{t+1}}.
\end{gather*}
Hence, using the law of total variance and \eqref{F: FGM-Z-G}, we conclude that
{\allowdisplaybreaks
\begin{align*}
\bbV(I_{t+1})&= I_t(I_t-1)\left(1-\dfrac{2p}{N}\right)^{A_t}+I_t\left(1-\dfrac{p}{N}\right)^{A_t}-I_t^2 \, E\biggl(\left(\dfrac{I_t-1}{I_t}\right)^{2Z_{t+1}}\biggm|\Ft\biggr)+{}\\[0.2cm]
&\phantom{=}{}+I_t^2 \, E\biggl(\left(\dfrac{I_t-1}{I_t}\right)^{2Z_{t+1}}\biggm|\Ft\biggr)-I_t^2\left(1-\dfrac{p}{N}\right)^{2A_t}\\[0.2cm]
&= I_t(I_t-1)\left(1-\dfrac{2p}{N}\right)^{A_t}+I_t\left(1-\dfrac{p}{N}\right)^{A_t}-I_t^2\left(1-\dfrac{p}{N}\right)^{2A_t}.
\end{align*}}%

Moreover, as $D_{t+1} = N+1-X_{t+1}-I_t$, we get
\[ \bbV(D_{t+1}) = \bbV(X_{t+1}) = A_t \, p(1-p). \]

In order to compute the conditional variance of $A_{t+1}$ given $\Ft$, recall that $A_{t+1} = X_{t+1}+I_t-I_{t+1}$.
Thus,
\[ \bbV(A_{t+1})=\bbV(I_{t+1})+\bbV(D_{t+1})- 2 \, \bbC(I_{t+1}, X_{t+1}). \]
To finish the proof, it suffices to show that
\begin{equation}
\label{F: CovIX}
\bbC(I_{t+1}, X_{t+1}) = -p \, A_t \, I_t \left(1-\frac{p}{N}\right)^{A_t}\left(\frac{1-p}{N-p}\right).
\end{equation}
To compute the conditional covariance in \eqref{F: CovIX}, first notice that
{\allowdisplaybreaks
\begin{align*}
\bbE(I_{t+1}|X_{t+1})&=\bbE(\bbE(I_{t+1}|X_{t+1},Z_{t+1})|X_{t+1})\\[0.2cm]
&=\bbE\biggl(I_t\left(\dfrac{I_t-1}{I_t}\right)^{Z_{t+1}}\biggm|X_{t+1}\biggr)
= I_t\left(1-\dfrac{1}{N}\right)^{X_{t+1}}.
\end{align*}}%
Hence, using formula \eqref{F: FGP-Bin2}, we have that
{\allowdisplaybreaks
\begin{align*}
\bbE(I_{t+1} X_{t+1})&= \bbE(X_{t+1} \, \bbE(I_{t+1}|X_{t+1}))\\[0.2cm]
&= \bbE\biggl(X_{t+1} \, I_t\left(1-\dfrac{1}{N}\right)^{X_{t+1}}\biggr)
= p \, A_t \, I_t \left(1-\dfrac{1}{N}\right)\left(1-\dfrac{p}{N}\right)^{A_t-1}.
\end{align*}}%
Consequently,
{\allowdisplaybreaks
\begin{align*}
\bbC(I_{t+1}, X_{t+1}) &= \bbE(I_{t+1} X_{t+1}) - \bbE(I_{t+1}) \, \bbE(X_{t+1})\\[0.2cm]
&= p \, A_t \, I_t \left(1-\dfrac{p}{N}\right)^{A_t}
\left[\left(1-\dfrac{p}{N}\right)^{-1}\left(1-\dfrac{1}{N}\right) - 1\right]\\[0.2cm]
&= -p \, A_t \, I_t \left(1-\dfrac{p}{N}\right)^{A_t}\left(\dfrac{1-p}{N-p}\right).
\end{align*}}%
Thus, \eqref{F: CovIX} holds and the proof is complete.
\end{proof}

With the previous two lemmas in hand, we may now establish two preparatory results for the proof of Theorem~\ref{T: LLN-G}.

\begin{lem}
\label{L: G3}
Suppose that $\eta_t - \xi_t \cpr \vzero$ as $N \to \infty$, for some $t \geq 0$.
Then,
\[ \bbE(\eta_{t+1}) - \xi_{t+1} \cpr \vzero \quad \text{as } N \to \infty. \]
\end{lem}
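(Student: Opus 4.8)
The plan is to establish the three scalar convergences $\bbE(i_{t+1}) - \iota_{t+1} \cpr 0$, $\bbE(a_{t+1}) - \alpha_{t+1} \cpr 0$ and $\bbE(d_{t+1}) - \delta_{t+1} \cpr 0$ one at a time, starting from the closed formulas of Lemma~\ref{L: G1} divided by $N+1$. Writing $i_t = I_t/(N+1)$, $a_t = A_t/(N+1)$, $d_t = D_t/(N+1)$, these read $\bbE(i_{t+1}) = i_t(1-p/N)^{A_t}$, $\bbE(a_{t+1}) = p\,a_t + i_t\bigl(1-(1-p/N)^{A_t}\bigr)$ and $\bbE(d_{t+1}) = d_t + (1-p)\,a_t$, to be compared with the deterministic recursion~\eqref{F: SDG}. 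The $d$-component is immediate: since $\bbE(d_{t+1}) - \delta_{t+1} = (d_t - \delta_t) + (1-p)(a_t - \alpha_t)$, the induction hypothesis together with the stability of convergence in probability under linear combinations gives the claim.

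The crux is the term $(1-p/N)^{A_t}$, which I must show is close in probability to $e^{-p\alpha_t}$. Here I would isolate a single purely analytic fact. Writing $A_t = (N+1)a_t$ and $g_N(x) = (1-p/N)^{(N+1)x}$, and using $(N+1)\log(1-p/N) \to -p$, one has $g_N(x) \to e^{-px}$ uniformly on the compact interval $[0,1]$. Because $0 \le a_t \le 1$ always (as $0 \le A_t \le N+1$), this yields the \emph{deterministic} bound $\bigl|(1-p/N)^{A_t} - e^{-pa_t}\bigr| \le \rho_N$, where $\rho_N := \sup_{x\in[0,1]}\bigl|g_N(x) - e^{-px}\bigr| \to 0$; hence this difference tends to $0$ in probability trivially. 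Combining this with the Lipschitz estimate $\bigl|e^{-pa_t} - e^{-p\alpha_t}\bigr| \le p\,|a_t - \alpha_t|$ (valid since $a_t, \alpha_t \ge 0$) and the induction hypothesis, I obtain $(1-p/N)^{A_t} - e^{-p\alpha_t} \cpr 0$.

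With this in hand the $i$-component follows from the decomposition
\[
\bbE(i_{t+1}) - \iota_{t+1} = i_t\bigl[(1-p/N)^{A_t} - e^{-p\alpha_t}\bigr] + (i_t - \iota_t)\,e^{-p\alpha_t},
\]
in which the first summand is the product of the bounded factor $i_t \in [0,1]$ with a term tending to $0$ in probability, and the second is handled by the induction hypothesis together with $e^{-p\alpha_t} \le 1$. For the $a$-component I would avoid re-expanding the exponential by exploiting $i_t\bigl(1-(1-p/N)^{A_t}\bigr) = i_t - \bbE(i_{t+1})$ and $\iota_t(1-e^{-p\alpha_t}) = \iota_t - \iota_{t+1}$, which gives
\[
\bbE(a_{t+1}) - \alpha_{t+1} = p\,(a_t - \alpha_t) + (i_t - \iota_t) - \bigl(\bbE(i_{t+1}) - \iota_{t+1}\bigr),
\]
each term of which converges to $0$ in probability by the induction hypothesis and the $i$-component just proved.

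I expect the only genuine obstacle to be the joint treatment of the functional approximation $g_N \to e^{-p\,\cdot}$ and the random exponent $A_t$. The clean way around it is precisely to separate the purely analytic (uniform, hence sure) convergence $\rho_N \to 0$ from the probabilistic convergence $a_t \to \alpha_t$, rather than attempting a single continuous-mapping argument, which would be awkward because the limit $\alpha_t = \alpha_t^{(N)}$ itself varies with $N$. Everything else reduces to the elementary facts that convergence in probability is preserved under sums and under multiplication by uniformly bounded sequences.
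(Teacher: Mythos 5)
Your proof is correct, and it reaches the conclusion by a decomposition that is genuinely different in its details from the paper's, even though both rest on the same two pillars (an analytic approximation error that vanishes deterministically, plus a term controlled by the induction hypothesis through a Lipschitz estimate). The paper defines $\psi_N(i,a)=i(1-p/N)^{(N+1)a}$, bounds its partial derivatives to get a Lipschitz constant $\sqrt{5}$ on $[0,1]^2$, applies the induction hypothesis through $|\psi_N(i_t,a_t)-\psi_N(\iota_t,\alpha_t)|$, and then absorbs the analytic error at the \emph{deterministic} point via $e^{-p\alpha_t}-(1-p/N)^{(N+1)\alpha_t}\le e^{-p}-(1-p/N)^{N+1}\to 0$. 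You instead evaluate the analytic error at the \emph{random} point $a_t$, which forces you to upgrade pointwise to uniform convergence of $g_N(x)=(1-p/N)^{(N+1)x}$ to $e^{-px}$ on $[0,1]$ (true, since $(N+1)\log(1-p/N)\to-p$ and the exponent is bounded), and you put the Lipschitz estimate on the limit function $e^{-px}$ rather than on the pre-limit $\psi_N$; this sidesteps the derivative computations and makes the triangular-array nature of the limit ($\alpha_t=\alpha_t^{(N)}$ depends on $N$) explicit rather than implicit. For the $a$-component you also differ: the paper invokes $i_{t+1}+a_{t+1}+d_{t+1}=\iota_{t+1}+\alpha_{t+1}+\delta_{t+1}=1$ and deduces the third convergence from the other two, whereas you derive the identity $\bbE(a_{t+1})-\alpha_{t+1}=p(a_t-\alpha_t)+(i_t-\iota_t)-(\bbE(i_{t+1})-\iota_{t+1})$ directly from the recursions; both are valid, and yours has the minor advantage of not needing the $d$-component first. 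The $d$-component is handled identically in both arguments.
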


\begin{proof}
Assume that $\eta_t-\xi_t \cpr \vzero$ for some $t \geq 0$.
Given $N \geq 3$ and $p \in [0, 1]$, we define the function
\begin{equation*}
\psi(i,a)=\psi_{N,p}(i,a)=i\left(1-\dfrac{p}{N}\right)^{(N+1)a}, \, (i,a)\in[0, 1]^2.
\end{equation*}
By Lemma~\ref{L: G1}, we have that
\begin{equation*}
\bbE(i_{t+1}) = i_t\left(1-\dfrac{p}{N}\right)^{(N+1)a_t}
= \psi(i_t,a_t).
\end{equation*}
Then, using the definition of $\iota_{t+1}$ given in~\eqref{F: SDG} and triangle inequality,
\begin{equation}
\label{F: DI}
{\allowdisplaybreaks
\begin{aligned}
\left\lvert \bbE(i_{t+1})-\iota_{t+1} \right\rvert&=
\left\lvert \psi(i_t,a_t)-\iota_t \, e^{-p\alpha_t} \right\rvert\\[0.1cm]
&\leq
\left\lvert \psi(i_t,a_t)-\psi(\iota_t,\alpha_t) \right\rvert
+\left\lvert \iota_t \, e^{-p\alpha_t}-\psi(\iota_t,\alpha_t) \right\rvert.
\end{aligned}}%
\end{equation}
But for every $N\geq 3$, $p \in [0, 1]$ and $(i,a)\in[0, 1]^2$,
{\allowdisplaybreaks
\begin{align*}
\left\lvert \dfrac{\partial\psi(i,a)}{\partial i} \right\rvert&=
\left\lvert \left(1-\dfrac{p}{N}\right)^{(N+1)a} \right\rvert \leq 1, \quad \text{and}\\[0.2cm]
\left\lvert \dfrac{\partial\psi(i,a)}{\partial a} \right\rvert&=
\left\lvert i\left(1-\dfrac{p}{N}\right)^{(N+1)a}(N+1) \, \log\left(1-\dfrac{p}{N}\right) \right\rvert\\[0.2cm]
&\leq -(N+1) \, \log\left(1-\dfrac{1}{N}\right) \leq 2.
\end{align*}}%
This implies that $\psi$ is a Lipschitz continuous function in $[0, 1]^2$, with Lipschitz constant at most equal to~$\sqrt{5}$.
Consequently,
\begin{equation}
\label{F: DL}
\left\lvert \psi(i_t,a_t)-\psi(\iota_t,\alpha_t) \right\rvert
\leq \sqrt{5} \left\lVert (i_t,a_t)-(\iota_t,\alpha_t) \right\rVert \cpr 0,
\end{equation}
as $N\to \infty$. 
In addition, standard calculus shows that
\begin{equation}
\label{F: DF}
{\allowdisplaybreaks
\begin{aligned}
\left\lvert \iota_t \, e^{-p\alpha_t}-\psi(\iota_t,\alpha_t) \right\rvert
&\leq e^{-p\alpha_t}-\left(1-\dfrac{p}{N}\right)^{(N+1)\alpha_t}\\[0.2cm]
&\leq e^{-p}-\left(1-\dfrac{p}{N}\right)^{(N+1)} \rightarrow 0,
\end{aligned}}%
\end{equation}
as $N\to \infty$.
From \eqref{F: DI}, \eqref{F: DL} and \eqref{F: DF}, it follows that
\begin{equation}
\label{F: ConvI-G}
\bbE(i_{t+1})-\iota_{t+1} \cpr 0 \quad \text{as } N \to \infty.
\end{equation}

Now, using Lemma~\ref{L: G1} and formula \eqref{F: SDG}, we have
{\allowdisplaybreaks
\begin{align*}
\bbE(d_{t+1})-\delta_{t+1} &= d_t+(1-p)a_t - [\delta_t+(1-p) \, \alpha_t]\\[0.2cm]
&= d_t-\delta_t+(1-p)(a_t-\alpha_t) \cpr 0,
\end{align*}}%
as $N \to \infty$.
Since $i_{t+1}+d_{t+1}+a_{t+1}=\iota_{t+1}+\delta_{t+1}+\alpha_{t+1}=1$, we conclude that
$\bbE(a_{t+1})-\alpha_{t+1} \cpr 0$.
\end{proof}

\begin{lem}
\label{L: G4}
For every $t \geq 0$, we have that
\[ \bbV(\eta_{t+1}) \cpr \vzero \quad \text{as } N \to \infty. \]
\end{lem}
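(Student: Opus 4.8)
The plan is to prove the statement by producing deterministic (almost sure) upper bounds on the three conditional variances furnished by Lemma~\ref{L: G2}, and then dividing by $(N+1)^2$. Since $\eta_{t+1}=(I_{t+1},A_{t+1},D_{t+1})/(N+1)$, we have $\bbV(\eta_{t+1})=\bigl(\bbV(I_{t+1}),\bbV(A_{t+1}),\bbV(D_{t+1})\bigr)/(N+1)^2$, so it suffices to show that each unscaled conditional variance is $O(N)$ uniformly in the (random but $\Ft$-measurable) values of $I_t$ and $A_t$, which obey $0\le I_t,A_t\le N+1$. Note in particular that no induction hypothesis is required here: the bounds I will derive are deterministic null sequences, so the convergence is in fact almost sure and a fortiori in probability.

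The variance of $D_{t+1}$ is immediate: by Lemma~\ref{L: G2}, $\bbV(D_{t+1})=A_t\,p(1-p)\le N+1$, hence $\bbV(d_{t+1})\le 1/(N+1)\to 0$. The crux of the argument is the bound on $\bbV(I_{t+1})$, because the naive estimate for a random variable supported on $[0,N+1]$ only yields a variance of order $N^2$, which does not vanish after scaling. The key observation is that $1-\tfrac{2p}{N}\le\bigl(1-\tfrac{p}{N}\bigr)^2$ (as $(p/N)^2\ge 0$), and that for $N\ge 3$, $p\in[0,1]$ the base $1-\tfrac{2p}{N}$ is strictly positive, so $\bigl(1-\tfrac{2p}{N}\bigr)^{A_t}\le\bigl(1-\tfrac{p}{N}\bigr)^{2A_t}$. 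Rewriting the formula of Lemma~\ref{L: G2} as
\[
\bbV(I_{t+1})=I_t\Bigl(1-\tfrac{p}{N}\Bigr)^{A_t}+I_t^2\Bigl[\bigl(1-\tfrac{2p}{N}\bigr)^{A_t}-\bigl(1-\tfrac{p}{N}\bigr)^{2A_t}\Bigr]-I_t\bigl(1-\tfrac{2p}{N}\bigr)^{A_t},
\]
the last two terms are nonpositive, whence $\bbV(I_{t+1})\le I_t\bigl(1-\tfrac{p}{N}\bigr)^{A_t}\le N+1$, and therefore $\bbV(i_{t+1})\le 1/(N+1)\to 0$.

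Finally, for $A_{t+1}$ I will use the decomposition from Lemma~\ref{L: G2}, namely $\bbV(A_{t+1})=\bbV(I_{t+1})+\bbV(D_{t+1})+2p\,A_t\,I_t\bigl(1-\tfrac{p}{N}\bigr)^{A_t}\tfrac{1-p}{N-p}$. The first two summands have already been controlled; for the third I will bound $\bigl(1-\tfrac{p}{N}\bigr)^{A_t}\le 1$, $A_tI_t\le (N+1)^2$, and $\tfrac{1-p}{N-p}\le\tfrac{1}{N-1}$, giving a contribution at most $2(N+1)^2/(N-1)$, i.e. of order $N$. Collecting the three estimates, every coordinate of $\bbV(\eta_{t+1})$ is dominated by a deterministic sequence tending to $0$, so $\bbV(\eta_{t+1})\cpr\vzero$. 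I expect the only genuine obstacle to be the sharp $O(N)$ bound on $\bbV(I_{t+1})$: everything rests on the sign analysis above, that is, on exploiting the inequality $1-\tfrac{2p}{N}\le(1-\tfrac{p}{N})^2$ to discard the two negative terms rather than bounding their magnitudes.
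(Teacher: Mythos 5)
Your proof is correct and follows essentially the same route as the paper: deterministic bounds on the conditional variances from Lemma~\ref{L: G2}, with the crucial step being the Bernoulli-type inequality $1-\tfrac{2p}{N}\le\bigl(1-\tfrac{p}{N}\bigr)^{2}$ to show $\bbV(I_{t+1})\le I_t\le N+1$, then division by $(N+1)^2$. The only cosmetic difference is that your single algebraic rearrangement of $\bbV(I_{t+1})$ handles all values of $I_t$ at once, whereas the paper splits into the cases $I_t=0$, $I_t=1$ and $I_t\ge 2$ before applying the same inequality.
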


\begin{proof}
The result is a consequence of Lemma~\ref{L: G2}.
First, we claim that
\begin{equation}
\label{F: VarI}
\bbV(i_{t+1}) \leq \dfrac{1}{N+1}.
\end{equation}
To prove \eqref{F: VarI}, we distinguish three cases:
\begin{lprova}
\item If $I_t=0$, then $\bbV(i_{t+1})=0$.

\item If $I_t=1$, then
\begin{equation*}
\bbV(i_{t+1}) = \left(\dfrac{1}{N+1}\right)^2\left[\left(1-\dfrac{p}{N}\right)^{A_t}-\left(1-\dfrac{p}{N}\right)^{2 A_t}\right]
\leq \left(\dfrac{1}{N+1}\right)^2.
\end{equation*}

\item If $I_t\geq 2$, then
{\allowdisplaybreaks
\begin{align*}
\bbV(i_{t+1})&= i_t \, \biggl(\left(i_t-\dfrac{1}{N+1}\right)\left(1-\dfrac{2p}{N}\right)^{A_t}-i_t\left(1-\dfrac{p}{N}\right)^{2A_t}+\dfrac{1}{N+1}\left(1-\dfrac{p}{N}\right)^{A_t}\biggr)\\[0.2cm]
&\leq i_t \, \biggl(i_t \, \biggl[\left(1-\dfrac{2p}{N}\right)^{A_t}-\left(1-\dfrac{p}{N}\right)^{2A_t}\biggr]+\dfrac{1}{N+1}\biggr).
\end{align*}}%
Applying Bernoulli's inequality, we conclude that the expression between brackets is nonpositive, so
\[ \bbV(i_{t+1}) \leq \dfrac{i_t}{N+1}\leq \dfrac{1}{N+1}. \]
\end{lprova}
Thus, \eqref{F: VarI} holds, and, from this equation, it follows that $\bbV(i_{t+1}) \cpr 0$ as $N \to \infty$.
Furthermore,
\begin{equation*}
\bbV(d_{t+1})=a_t \, \dfrac{p(1-p)}{N+1}\leq \dfrac{p(1-p)}{N+1} \cpr 0,
\end{equation*}
as $N \to \infty$.
Finally, we observe that
{\allowdisplaybreaks
\begin{align*}
\bbV(a_{t+1})&= \bbV(i_{t+1}) + \bbV(d_{t+1}) + 2 \, p \, a_t \, i_t
\left(1-\dfrac{p}{N}\right)^{A_t}\left(\dfrac{1-p}{N-p}\right)\\[0.2cm]
&\leq \bbV(i_{t+1}) + \bbV(d_{t+1}) + 2 \, p \left(\dfrac{1-p}{N-p}\right),
\end{align*}}%
whence $\bbV(a_{t+1}) \cpr 0$.
\end{proof}

\begin{proof}[Proof of Theorem~\ref{T: LLN-G}]
We proceed by induction.
For $t = 0$, the statement is trivially true, as the initial conditions of the stochastic model and the deterministic system coincide.
Now, let $t \geq 0$, and assume~\eqref{F: LLN-G} is true.
By Lemma~\ref{L: G3}, we have that $\bbE(i_{t+1})-\iota_{t+1} \cpr 0$ as $N \to \infty$, hence the dominated convergence theorem yields
{\allowdisplaybreaks
\begin{align*}
E(i_{t+1})-\iota_{t+1} & \rightarrow 0, \quad \text{and}\\[0.2cm]
\V(\bbE(i_{t+1})) & \rightarrow 0.
\end{align*}}%
Thus, using Lemma~\ref{L: G4} and the dominated convergence theorem again, we get
\begin{equation*}
\V(i_{t+1}) = E(\bbV(i_{t+1})) + \V(\bbE(i_{t+1})) \rightarrow 0.
\end{equation*}
Therefore, by Markov inequality, for every $\eps > 0$, we have that
\begin{equation*}
P(|i_{t+1}-\iota_{t+1}|>\eps) \leq 
\dfrac{1}{\eps^2} \left(\V(i_{t+1})+\left[E(i_{t+1})-\iota_{t+1}\right]^2 \right)
\rightarrow 0.
\end{equation*}
So $i_{t+1}-\iota_{t+1} \cpr 0$ as $N \to \infty$.
The same arguments hold for the other components of the random vector $\eta_{t+1} - \xi_{t+1}$, thereby completing the induction step.
\end{proof}

\subsection{Proof of Theorem~\ref{T: ABG}}
\label{SS: Proof ABG}

Fixed $p \in (0, 1)$ and $N \geq 3$, recall that $\xi_t = (\iota_t,\alpha_t,\delta_t)$ satisfies the equations
\begin{subequations}
\label{F: SD-G}
{\allowdisplaybreaks
\begin{alignat}{2}
&\iota_{t+1} = \iota_t \, e^{-p \alpha_t},\label{F: SD-I}\\[0.2cm]
&\alpha_{t+1} = p \, \alpha_t+\iota_t \, (1-e^{-p\alpha_t}),\label{F: SD-A}\\[0.2cm]
&\delta_{t+1} = \delta_t+(1-p) \, \alpha_t,\label{F: SD-D}\\[0.2cm]
&\iota_0 = \dfrac{N}{N+1}, \; \alpha_0 = \dfrac{1}{N+1}, \; \delta_0 = 0.\nonumber
\end{alignat}}%
\end{subequations}
Notice that $\iota_t$, $\alpha_t$ and $\delta_t$ are positive for every $t \geq 1$, and 
$\iota_t + \alpha_t + \delta_t = 1$ for every $t \geq 0$.
Moreover, as $\{\iota_t\}$ is decreasing and $\{\delta_t\}$ is increasing in $t$, the following limits exist:
\begin{equation*}
\iota_{\infty}^{(N)}=\lim_{t\to \infty}\iota_t^{(N)}
\quad \text{and} \quad
\delta_{\infty}^{(N)}=\lim_{t\to \infty}\delta_t^{(N)}.
\end{equation*}
From~\eqref{F: SD-D}, it follows that $\displaystyle\lim_{t\to \infty }\alpha_t^{(N)}=0$, thus $\iota_{\infty}^{(N)}+\delta_{\infty}^{(N)}=1$.

Now from \eqref{F: SD-I}, we obtain that for $t \geq 1$,
\begin{equation*}
\iota_t=\iota_0 \, \exp\biggl\{-p\sum_{j=0}^{t-1}\alpha_j\biggr\}.
\end{equation*}
However, from \eqref{F: SD-D},
\begin{equation*}
\sum_{j=0}^{t-1}\alpha_{j}=\dfrac{1}{1-p}\sum_{j=0}^{t-1}\left(\delta_{j+1}-\delta_{j}\right)=\dfrac{\delta_t}{1-p}.
\end{equation*}
Therefore,
\begin{equation*}
\iota_t=\iota_0 \, \exp\left\{-\phi(p) \, \delta_t\right\}, \, t \geq 0.
\end{equation*}
Taking $t \to \infty$, we conclude that $\iota_{\infty}^{(N)}$ is the unique fixed point of the function
\begin{equation*}
\tau^{(N)}(x)=\left(\dfrac{N}{N+1}\right) \exp\{-\phi(p) (1 - x)\}
\end{equation*}
in the interval [0, 1].

Since $\tau^{(N)}(x) \leq \tau^{(N+1)}(x)$ for every $x$, we have that $\iota_{\infty}^{(N)} \leq \iota_{\infty}^{(N+1)}$, hence there exists $\iota_{\infty}=\displaystyle\lim_{N\to \infty} \iota_{\infty}^{(N)}$.
Next, we observe that the sequence of functions $\{ \tau^{(N)} \}$ converges uniformly as $N \to \infty$ to the function $\tau$ defined in \eqref{F: Func}.
Consequently, $\iota_{\infty}$ is a fixed point of the function $\tau$ in $[0, 1]$.

Now, if $p\leq 1/2$, then $\tau^{\prime}(1)=\phi(p)\leq 1$, so $\iota_{\infty}=1$ is the unique fixed point of~$\tau$ in $[0, 1]$.
On the other hand, if $p> 1/2$, then $\tau^{\prime}(1)=\phi(p)> 1$, and $\tau$ has two fixed points, one at $x_1 = -[\phi(p)]^{-1} \, W_0(-\phi(p) \, e^{-\phi(p)}) < 1$, and the other at $x_2 = 1$.
To finish the proof, it is enough to show that $\iota_{\infty} < 1$ for $p> 1/2$.
To accomplish this, we use that, for every $x \in (0, 1)$,
\begin{equation}
\label{F: Des}
-\dfrac{x(1+x)}{1-x} \leq \log(1-x).
\end{equation}
By taking $x=2p-1$ in \eqref{F: Des}, we obtain that
\[ \tau^{(N)}(2(1-p)) < \exp\left\{ -\dfrac{p}{1 - p} \, (2 p - 1) \right\} \leq 2 (1-p), \]
so that $\iota_{\infty}^{(N)}<2(1-p)$.
Hence, $\iota_{\infty}\leq 2(1-p) <1$.~\qed

\subsection{Proof of Theorem~\ref{T: LLN-NG}}
\label{SS: Proof LLN-NG}

We follow a strategy similar to the one used in the proof of Theorem~\ref{T: LLN-G}.
As we observe in Section~\ref{SS: NGM}, for the nongeometric model, the Markov chain $\{(I_t, A_t, D_t)\}_{t \geq 0}$ has transitions described by the pair of formulas \eqref{F: Aux-NG}--\eqref{F: Trans-NG}, which are essentially different from \eqref{F: Aux-G}--\eqref{F: Trans-G}, set out for the geometric model.
This entails that other computations are needed in establishing the analogues of Lemmas~\ref{L: G1} to~\ref{L: G4}.
We include the details here, mainly for the sake of completeness.
For $t \geq 0$, let 
\(\Ft = \sigma(\{\left(I_s, A_s, D_s\right): 0 \leq s \leq t \})\).

\begin{lem}
\label{L: NG1}
For every $t \geq 0$, the conditional expectations of $I_{t+1}$, $A_{t+1}$ and $D_{t+1}$ given $\Ft$ are, respectively,
{\allowdisplaybreaks
\begin{align*}
\bbE(I_{t+1})&= I_t\left(1-\dfrac{1}{N}\right)^{A_t},\\[0.2cm]
\bbE(A_{t+1})&= I_t\left[\dfrac{A_t}{N}+1-\left(1-\dfrac{1}{N}\right)^{A_t}\right],\\[0.2cm]
\bbE(D_{t+1})&= D_t+\left(1-\dfrac{I_t}{N}\right)A_t.
\end{align*}}%
\end{lem}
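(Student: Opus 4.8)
The plan is to mirror the argument of Lemma~\ref{L: G1}, exploiting the fact that for the nongeometric model the survival and relocation of particles are encoded in the single variable $Z_{t+1}$, which shortens the first step. First I would compute the conditional probability generating function of $Z_{t+1}$ given $\Ft$. Since $Z_{t+1} \sim \Binomial(A_t, I_t/N)$ by~\eqref{F: Aux-NG}, formula~\eqref{F: FGP-Bin1} gives directly
\[
\bbE(s^{Z_{t+1}}) = \left(1 - \frac{I_t}{N}(1 - s)\right)^{A_t}, \quad s \in \bbR,
\]
with no need for the two-stage conditioning (through $X_{t+1}$) that was required in the geometric case.

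Next, for the mean of $I_{t+1}$ I would invoke the $\EmpBox$ expectation~\eqref{F: EspEB} to write $\bbE(I_{t+1} \mid Z_{t+1}) = I_t\,((I_t-1)/I_t)^{Z_{t+1}}$, and then take a further conditional expectation by evaluating the generating function above at $s = (I_t-1)/I_t$. The factor $\tfrac{I_t}{N}(1 - \tfrac{I_t-1}{I_t}) = \tfrac{1}{N}$ collapses, yielding $\bbE(I_{t+1}) = I_t(1 - 1/N)^{A_t}$, as claimed.

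For $A_{t+1}$ and $D_{t+1}$ I would use the transition rules~\eqref{F: Trans-NG} together with linearity and the binomial mean $\bbE(Z_{t+1}) = A_t I_t/N$. Since $A_{t+1} = Z_{t+1} + I_t - I_{t+1}$, combining this with the previous display produces the stated expression for $\bbE(A_{t+1})$. For the last identity the key observation is the cancellation $I_{t+1} + A_{t+1} = Z_{t+1} + I_t$, which turns $D_{t+1} = N + 1 - I_{t+1} - A_{t+1}$ into $D_{t+1} = N + 1 - Z_{t+1} - I_t$, so that $\bbE(D_{t+1}) = N + 1 - I_t - A_t I_t/N$.

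The only step requiring slight care, and the one I would flag as the main obstacle, is reconciling this last expression with the target form $D_t + (1 - I_t/N)A_t$. This is purely algebraic: substituting the conservation law $D_t = N + 1 - I_t - A_t$ into the target and simplifying recovers $N + 1 - I_t - A_t I_t/N$, closing the computation. No genuine probabilistic difficulty arises here; the content lies entirely in tracking the bookkeeping identity $I_t + A_t + D_t = N+1$ correctly.
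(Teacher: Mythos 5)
Your proposal is correct and follows essentially the same route as the paper's proof: the probability generating function of $Z_{t+1}\sim\Binomial(A_t,I_t/N)$ evaluated at $s=(I_t-1)/I_t$ via \eqref{F: EspEB} and \eqref{F: FGP-Bin1}, followed by linearity applied to $A_{t+1}=Z_{t+1}+I_t-I_{t+1}$ and $D_{t+1}=N+1-Z_{t+1}-I_t$. The only difference is that you spell out the final algebraic reduction using $I_t+A_t+D_t=N+1$, which the paper leaves implicit.
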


\begin{lem}
\label{L: NG2}
For every $t \geq 0$, the conditional variances of $I_{t+1}$, $D_{t+1}$ and $A_{t+1}$ given~$\Ft$ are, respectively,
{\allowdisplaybreaks
\begin{align*}
\bbV(I_{t+1})&= I_t \, \biggl(\left(I_t-1\right)\left(1-\dfrac{2}{N}\right)^{A_t}-I_t\left(1-\dfrac{1}{N}\right)^{2A_t}+\left(1-\dfrac{1}{N}\right)^{A_t}\biggr),\\[0.2cm]
\bbV(D_{t+1})&= A_t\left(\dfrac{I_t}{N}\right)\left(1-\dfrac{I_t}{N}\right),\\[0.1cm]
\bbV(A_{t+1})&= \bbV(I_{t+1}) + \bbV(D_{t+1}) + 
\dfrac{2 \, A_t \, I_t}{N}\left(1-\dfrac{1}{N}\right)^{A_t}\left(\dfrac{N-I_t}{N-1}\right).
\end{align*}}%
\end{lem}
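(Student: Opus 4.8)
The plan is to follow the proof of Lemma~\ref{L: G2} step by step, adapting every computation to the single-binomial structure \eqref{F: Aux-NG}--\eqref{F: Trans-NG}. The starting point is the conditional probability generating function of $Z_{t+1}$: since $Z_{t+1}\sim\Binomial(A_t,I_t/N)$, formula \eqref{F: FGP-Bin1} gives
\[
\bbE\bigl(s^{Z_{t+1}}\bigr)=\left(1-\frac{I_t}{N}(1-s)\right)^{A_t},\qquad s\in\bbR,
\]
which plays here the role that \eqref{F: FGM-Z-G} plays for the geometric model; note the absence of the factor $p$, since survival and displacement are now encoded in the single variable $Z_{t+1}$.

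First I would compute $\bbV(I_{t+1})$ by the law of total variance, conditioning on $Z_{t+1}$. From \eqref{F: EspEB} and \eqref{F: VarEB} with $b=Z_{t+1}$ and $c=I_t$ one reads off $\bbE(I_{t+1}\mid Z_{t+1})=I_t((I_t-1)/I_t)^{Z_{t+1}}$ together with $\V(I_{t+1}\mid Z_{t+1},\Ft)$; taking expectations then amounts to evaluating the generating function above at $s=(I_t-1)/I_t$ and $s=(I_t-2)/I_t$, which produce the factors $(1-1/N)^{A_t}$ and $(1-2/N)^{A_t}$. Exactly as in Lemma~\ref{L: G2}, the term $I_t^2\,\bbE[((I_t-1)/I_t)^{2Z_{t+1}}]$ enters with opposite signs in the two summands of the law of total variance and cancels, so it never has to be evaluated explicitly. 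This yields the stated expression for $\bbV(I_{t+1})$.

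Next, for $\bbV(D_{t+1})$ I would substitute $A_{t+1}=Z_{t+1}+I_t-I_{t+1}$ into $D_{t+1}=N+1-I_{t+1}-A_{t+1}$; the $I_{t+1}$ terms cancel and leave $D_{t+1}=N+1-I_t-Z_{t+1}$, so that $\bbV(D_{t+1})=\bbV(Z_{t+1})=A_t(I_t/N)(1-I_t/N)$, the variance of a binomial. Finally, since $A_{t+1}=Z_{t+1}+I_t-I_{t+1}$ and $I_t$ is $\Ft$-measurable, I would expand
\[
\bbV(A_{t+1})=\bbV(I_{t+1})+\bbV(Z_{t+1})-2\,\bbC(I_{t+1},Z_{t+1}),
\]
reducing the problem to the conditional covariance.

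The delicate step, and the one I expect to be the main obstacle, is $\bbC(I_{t+1},Z_{t+1})$. I would write $\bbE(I_{t+1}Z_{t+1})=\bbE\bigl(Z_{t+1}\,\bbE(I_{t+1}\mid Z_{t+1})\bigr)=I_t\,\bbE\bigl(Z_{t+1}((I_t-1)/I_t)^{Z_{t+1}}\bigr)$ and apply \eqref{F: FGP-Bin2} with $s=(I_t-1)/I_t$. Subtracting $\bbE(I_{t+1})\,\bbE(Z_{t+1})$ (with $\bbE(I_{t+1})$ taken from Lemma~\ref{L: NG1}) and simplifying the resulting bracket to $(I_t-N)/N$, then rewriting $(1-1/N)^{A_t-1}=\tfrac{N}{N-1}(1-1/N)^{A_t}$, produces the clean factor $(N-I_t)/(N-1)$ and gives $\bbC(I_{t+1},Z_{t+1})=-\tfrac{A_t I_t}{N}(1-1/N)^{A_t}\tfrac{N-I_t}{N-1}$. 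Substituting this into the expansion above yields the claimed formula for $\bbV(A_{t+1})$. Compared with the geometric case, this covariance is in fact slightly simpler: there is a single auxiliary variable, so $\bbE(I_{t+1}\mid Z_{t+1})$ is available directly from the $\EmpBox$ mean and no intermediate integration (the passage through $X_{t+1}$ used in Lemma~\ref{L: G2}) is needed. The only genuine care required is the algebraic simplification of the bracket into the stated form.
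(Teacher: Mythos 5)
Your proposal is correct and follows essentially the same route as the paper's proof: the law of total variance with the self-cancelling term $I_t^2\,\bbE[((I_t-1)/I_t)^{2Z_{t+1}}\mid\Ft]$ for $\bbV(I_{t+1})$, the identity $D_{t+1}=N+1-I_t-Z_{t+1}$ for $\bbV(D_{t+1})$, and the covariance $\bbC(I_{t+1},Z_{t+1})$ computed via \eqref{F: FGP-Bin2} at $s=(I_t-1)/I_t$, with the same algebraic simplification to $-\tfrac{A_tI_t}{N}\left(1-\tfrac{1}{N}\right)^{A_t}\tfrac{N-I_t}{N-1}$.
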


\begin{lem}
\label{L: NG3}
Suppose that $\eta_t - \tilde{\xi}_t \cpr \vzero$ as $N \to \infty$, for some $t \geq 0$.
Then,
\[ \bbE(\eta_{t+1}) - \tilde{\xi}_{t+1} \cpr \vzero \quad \text{as } N \to \infty. \]
\end{lem}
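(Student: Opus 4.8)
The plan is to replicate the three-step structure of the proof of Lemma~\ref{L: G3}, adapting each computation to the nongeometric transition formulas recorded in Lemma~\ref{L: NG1}; throughout I assume the induction hypothesis $\eta_t - \tilde{\xi}_t \cpr \vzero$. \textbf{The $i$-component.} First I would treat $\bbE(i_{t+1})$. Scaling Lemma~\ref{L: NG1} by $N+1$ gives $\bbE(i_{t+1}) = i_t (1 - 1/N)^{(N+1) a_t} = \tilde{\psi}(i_t, a_t)$, where $\tilde{\psi}(i,a) = i(1-1/N)^{(N+1)a}$ is precisely the function $\psi = \psi_{N,p}$ of Lemma~\ref{L: G3} evaluated at $p=1$. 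Since the bounds on $\partial\psi/\partial i$ and $\partial\psi/\partial a$ obtained there never used $p<1$, the map $\tilde{\psi}$ is again Lipschitz on $[0,1]^2$ with constant at most $\sqrt{5}$. Splitting
\[
|\bbE(i_{t+1}) - \tilde{\iota}_{t+1}| \leq |\tilde{\psi}(i_t,a_t) - \tilde{\psi}(\tilde{\iota}_t,\tilde{\alpha}_t)| + |\tilde{\iota}_t \, e^{-\tilde{\alpha}_t} - \tilde{\psi}(\tilde{\iota}_t,\tilde{\alpha}_t)|,
\]
the first term is $\leq \sqrt{5}\,\|(i_t,a_t)-(\tilde{\iota}_t,\tilde{\alpha}_t)\| \cpr 0$ by the induction hypothesis, while the second is dominated by $e^{-1} - (1-1/N)^{N+1} \to 0$, exactly as in \eqref{F: DF} with $p=1$. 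Hence $\bbE(i_{t+1}) - \tilde{\iota}_{t+1} \cpr 0$.

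\textbf{The $d$-component}, which I expect to be the main obstacle, is where the two models genuinely diverge. By Lemma~\ref{L: NG1}, $\bbE(d_{t+1}) = d_t + (1 - I_t/N)\,a_t$. In the geometric case the removal coefficient $(1-p)$ was constant and the comparison with $\delta_{t+1}=\delta_t+(1-p)\alpha_t$ was purely linear; here the coefficient is state-dependent and the deterministic target $\tilde{\delta}_{t+1} = \tilde{\delta}_t + \tilde{\alpha}_t(1 - \tilde{\iota}_t)$ carries the genuine product $\tilde{\alpha}_t(1-\tilde{\iota}_t)$. I would first absorb the discrepancy between $I_t/N$ and $i_t$: from $I_t/N = i_t(N+1)/N$ we get $1 - I_t/N = (1-i_t) - i_t/N$, so that
\[
\bbE(d_{t+1}) = d_t + a_t(1 - i_t) - \frac{a_t \, i_t}{N},
\]
where the last term is $O(1/N)$ since $a_t,i_t\in[0,1]$. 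It then remains to compare $d_t + a_t(1-i_t)$ with $\tilde{\delta}_t + \tilde{\alpha}_t(1 - \tilde{\iota}_t)$. Because the map $(d,a,i)\mapsto d+a(1-i)$ is Lipschitz on the compact set $[0,1]^3$, the hypothesis $\eta_t - \tilde{\xi}_t \cpr \vzero$ yields convergence in probability of this difference to $0$, and hence $\bbE(d_{t+1}) - \tilde{\delta}_{t+1} \cpr 0$. The only point requiring care is that the product of jointly convergent, uniformly bounded random variables converges in probability, which follows from the uniform $[0,1]$-boundedness of the coordinates.

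\textbf{The $a$-component} I would recover from mass conservation, thereby avoiding any direct confrontation with the nonlinear second equation of \eqref{F: SDNG}. The stochastic system satisfies $i_{t+1}+a_{t+1}+d_{t+1}=1$ identically, and a one-line induction shows the deterministic system preserves $\tilde{\iota}_t+\tilde{\alpha}_t+\tilde{\delta}_t=1$; taking conditional expectations gives $\bbE(i_{t+1})+\bbE(a_{t+1})+\bbE(d_{t+1})=1$. Subtracting the two identities,
\[
\bbE(a_{t+1}) - \tilde{\alpha}_{t+1} = -\bigl(\bbE(i_{t+1}) - \tilde{\iota}_{t+1}\bigr) - \bigl(\bbE(d_{t+1}) - \tilde{\delta}_{t+1}\bigr) \cpr 0,
\]
which settles the middle coordinate and completes the proof.
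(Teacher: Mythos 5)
Your proposal is correct and follows essentially the same route as the paper: the $i$-component is handled by reusing the Lipschitz function $\psi_{N,p}$ of Lemma~\ref{L: G3} with $p=1$, the $d$-component by a Lipschitz estimate on the state-dependent removal term (the paper uses $\varphi_N(i,a)=a[1-((N+1)/N)i]$ directly, while you first split off the $O(1/N)$ correction $a_t i_t/N$ and then apply the $N$-independent map $(d,a,i)\mapsto d+a(1-i)$ — a cosmetic difference), and the $a$-component by the conservation identity $i_{t+1}+a_{t+1}+d_{t+1}=\tilde{\iota}_{t+1}+\tilde{\alpha}_{t+1}+\tilde{\delta}_{t+1}=1$.
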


\begin{lem}
\label{L: NG4}
For every $t \geq 0$, we have that
\[ \bbV(\eta_{t+1}) \cpr \vzero \quad \text{as } N \to \infty. \]
\end{lem}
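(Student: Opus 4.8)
The plan is to follow the proof of Lemma~\ref{L: G4} essentially verbatim, bounding each of the three scaled conditional variances by a deterministic quantity of order $1/N$ by means of the formulas in Lemma~\ref{L: NG2}. The key observation is that the expression for $\bbV(I_{t+1})$ in Lemma~\ref{L: NG2} is exactly the one appearing in Lemma~\ref{L: G2} with $p$ replaced by $1$; consequently the same three-case analysis on the value of $I_t$ will yield the bound $\bbV(i_{t+1}) \leq 1/(N+1)$, now with the scaled variances $\bbV(i_{t+1}) = \bbV(I_{t+1})/(N+1)^2$, and analogously for $d_{t+1}$ and $a_{t+1}$.

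More precisely, I would first treat $\bbV(i_{t+1})$. The cases $I_t = 0$ and $I_t = 1$ are immediate, giving $\bbV(i_{t+1}) = 0$ and $\bbV(i_{t+1}) \leq (N+1)^{-2}$, respectively. For $I_t \geq 2$, after scaling one obtains
\[
\bbV(i_{t+1}) \leq i_t\biggl(i_t\Bigl[\Bigl(1-\tfrac{2}{N}\Bigr)^{A_t}-\Bigl(1-\tfrac{1}{N}\Bigr)^{2A_t}\Bigr]+\dfrac{1}{N+1}\biggr),
\]
and Bernoulli's inequality, in the form $(1-1/N)^2 \geq 1-2/N$, shows that the bracketed term is nonpositive. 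Hence $\bbV(i_{t+1}) \leq i_t/(N+1) \leq 1/(N+1) \to 0$ as $N \to \infty$.

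Next I would bound the remaining two variances by elementary estimates. Writing $\bbV(d_{t+1}) = \bbV(D_{t+1})/(N+1)^2 = a_t\,(I_t/N)(1-I_t/N)/(N+1)$ and using $a_t \leq 1$ together with $(I_t/N)(1-I_t/N) \leq 1/4$ gives $\bbV(d_{t+1}) \leq 1/(4(N+1))$. For $\bbV(a_{t+1})$ I would invoke the identity in Lemma~\ref{L: NG2} and bound the cross term by
\[
\dfrac{2\,a_t}{N+1}\cdot\dfrac{I_t}{N}\cdot\Bigl(1-\tfrac{1}{N}\Bigr)^{A_t}\cdot\dfrac{N-I_t}{N-1} \leq \dfrac{2}{N+1}\cdot\dfrac{N}{N-1},
\]
which, added to the two previous bounds, shows $\bbV(a_{t+1}) \to 0$.

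Since each scaled conditional variance is bounded above by a deterministic sequence tending to $0$, all three converge to $0$ surely, hence in probability, establishing $\bbV(\eta_{t+1}) \cpr \vzero$. I do not expect a genuine obstacle here: the only delicate point, exactly as in Lemma~\ref{L: G4}, is the verification that the difference of powers in the $I_t \geq 2$ case is nonpositive, which Bernoulli's inequality settles; the $D$- and $A$-variance bounds are routine once the covariance structure recorded in Lemma~\ref{L: NG2} is available. The only bookkeeping difference from the geometric case is the slightly more involved factor $(N-I_t)/(N-1)$ in the cross term, but this is bounded uniformly by $N/(N-1)$ and therefore causes no difficulty.
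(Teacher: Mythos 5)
Your proposal is correct and follows exactly the route the paper takes: the paper's own proof of Lemma~\ref{L: NG4} simply records the scaled variance formulas from Lemma~\ref{L: NG2} and repeats the three-case argument of Lemma~\ref{L: G4} (with $p$ replaced by $1$), which is precisely what you do, including the Bernoulli-inequality step for $I_t \geq 2$ and the elementary $O(1/N)$ bounds on the $D$- and $A$-terms.
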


Using Lemmas~\ref{L: NG3} and \ref{L: NG4}, Theorem~\ref{T: LLN-NG} is shown by induction on $t$, as we have done for Theorem~\ref{T: LLN-G}.

\begin{proof}[Proof of Lemma~\ref{L: NG1}]
From~\eqref{F: Aux-NG} and \eqref{F: FGP-Bin1}, we have that, for every $s \in \bbR$,
\begin{equation}
\label{F: FGM-Z-NG}
\bbE(s^{Z_{t+1}}) = \left(1-\dfrac{I_t}{N}(1-s)\right)^{A_{t}}.
\end{equation}
But from~\eqref{F: EspEB}, 
\begin{equation*}
\bbE(I_{t+1}|Z_{t+1}) = I_t\left(\dfrac{I_t-1}{I_t}\right)^{Z_{t+1}}.
\end{equation*}
Taking the expectation and using \eqref{F: FGM-Z-NG} gives
\begin{equation*}
\bbE(I_{t+1}) = I_t\left(1-\dfrac{I_t}{N}\cdot\dfrac{1}{I_t}\right)^{A_t}
= I_t\left(1-\dfrac{1}{N}\right)^{A_t}.
\end{equation*}
The formulas for the conditional expectations of $A_{t+1}$ and $D_{t+1}$ follow from the facts that $A_{t+1} = Z_{t+1}+I_t-I_{t+1}$ and $D_{t+1} = N+1-A_{t+1}-I_{t+1}$.
\end{proof}

\begin{proof}[Proof of Lemma~\ref{L: NG2}]
Using~\eqref{F: EspEB} and \eqref{F: VarEB}, we obtain
\begin{gather*}
\bbE\left(I_{t+1}|Z_{t+1}\right) = I_t\left(\dfrac{I_t-1}{I_t}\right)^{Z_{t+1}} \, \text{ and}\\[0.2cm]
\bbV(I_{t+1}|Z_{t+1}) = I_t(I_t-1)\left(\dfrac{I_t-2}{I_t}\right)^{Z_{t+1}}+I_t\left(\dfrac{I_t-1}{I_t}\right)^{Z_{t+1}}-I_t^2\left(\dfrac{I_t-1}{I_t}\right)^{2Z_{t+1}}.
\end{gather*}
Thus, the law of total variance and \eqref{F: FGM-Z-NG} yield
{\allowdisplaybreaks
\begin{align*}
\bbV(I_{t+1})&= I_t(I_t-1)\left(1-\dfrac{2}{N}\right)^{A_t}+I_t\left(1-\dfrac{1}{N}\right)^{A_t}-I_t^2 \, E\biggl(\left(\dfrac{I_t-1}{I_t}\right)^{2Z_{t+1}}\biggm|\Ft\biggr)+{}\\[0.2cm]
&\phantom{=}{}+I_t^2 \, E\biggl(\left(\dfrac{I_t-1}{I_t}\right)^{2Z_{t+1}}\biggm|\Ft\biggr)-I_t^2\left(1-\dfrac{1}{N}\right)^{2A_t}\\[0.2cm]
&= I_t(I_t-1)\left(1-\dfrac{2}{N}\right)^{A_t}-I_t^2\left(1-\dfrac{1}{N}\right)^{2A_t}
+I_t\left(1-\dfrac{1}{N}\right)^{A_t}.
\end{align*}}%

Since $D_{t+1} = N+1-Z_{t+1}-I_t$, we have that
\[ \bbV(D_{t+1}) = \bbV(Z_{t+1}) = A_t\left(\dfrac{I_t}{N}\right)\left(1-\dfrac{I_t}{N}\right). \]

Now, as $A_{t+1} = Z_{t+1}+I_t-I_{t+1}$, we get
\begin{equation}
\label{F: VarA-NG}
\bbV(A_{t+1})=\bbV(I_{t+1})+\bbV(D_{t+1})- 2 \, \bbC(I_{t+1}, Z_{t+1}).
\end{equation}
But, using formula \eqref{F: FGP-Bin2},
{\allowdisplaybreaks
\begin{align*}
\bbE(I_{t+1} Z_{t+1})&= \bbE(Z_{t+1} \, \bbE(I_{t+1}|Z_{t+1}))\\[0.2cm]
&= \bbE\biggl(Z_{t+1} \, I_t\left(\dfrac{I_t-1}{I_t}\right)^{Z_{t+1}}\biggr)
= \frac{A_t \, I_t \, (I_t - 1)}{N} \left(1-\dfrac{1}{N}\right)^{A_t-1}.
\end{align*}}%
Therefore,
{\allowdisplaybreaks
\begin{align*}
\bbC(I_{t+1}, Z_{t+1}) &= \bbE(I_{t+1} Z_{t+1}) - \bbE(I_{t+1}) \, \bbE(Z_{t+1})\\[0.2cm]
&= \frac{A_t \, I_t}{N} \left(1-\dfrac{1}{N}\right)^{A_t}
\left[(I_t - 1) \left(\frac{N}{N - 1}\right) - I_t\right]\\[0.2cm]
&= \dfrac{A_t \, I_t}{N}\left(1-\dfrac{1}{N}\right)^{A_t}\left(\dfrac{I_t-N}{N-1}\right).
\end{align*}}%
Substituting this in \eqref{F: VarA-NG} gives the desired formula for $\bbV(A_{t+1})$.
\end{proof}

\begin{proof}[Proof of Lemma~\ref{L: NG3}]
Suppose that $\eta_t-\tilde{\xi}_t \cpr \vzero$ for some $t \geq 0$.
By following the same steps as in the proof of \eqref{F: ConvI-G} (Lemma~\ref{L: G3}) with $p = 1$, we obtain that $\bbE(i_{t+1})-\tilde{\iota}_{t+1} \cpr 0$ as $N \to \infty$.
From Lemma~\ref{L: NG1} and formula \eqref{F: SDNG},
\begin{equation*}
\bbE(d_{t+1})-\tilde{\delta}_{t+1}=d_t-\tilde{\delta}_t+a_t\left[1-\left(\dfrac{N+1}{N}\right)i_t\right]-\tilde{\alpha}_t(1-\tilde{\iota}_t).
\end{equation*}
Then, applying the method used in establishing \eqref{F: ConvI-G}, with the function
\[ \varphi_N(i,a)=a\left[1-\left(\dfrac{N+1}{N}\right)i\right], \, (i,a)\in [0, 1]^2, \]
which is Lipschitz continuous with Lipschitz constant at most~$\sqrt{5}$, we conclude that
\(\bbE(d_{t+1})-\tilde{\delta}_{t+1} \cpr 0\) as $N \to \infty$.
Using that $i_{t+1}+d_{t+1}+a_{t+1}=\tilde{\iota}_{t+1}+\tilde{\delta}_{t+1}+\tilde{\alpha}_{t+1}=1$, we deduce that
$\bbE(a_{t+1})-\tilde{\alpha}_{t+1} \cpr 0$.
\end{proof}

\begin{proof}[Proof of Lemma~\ref{L: NG4}]
Notice that, from Lemma~\ref{L: NG2},
{\allowdisplaybreaks
\begin{align*}
\bbV(i_{t+1})&= i_t \, \biggl(\left(i_t-\dfrac{1}{N+1}\right)\left(1-\dfrac{2}{N}\right)^{A_t}-i_t\left(1-\dfrac{1}{N}\right)^{2A_t}+\dfrac{1}{N+1}\left(1-\dfrac{1}{N}\right)^{A_t}\biggr),\\[0.2cm]
\bbV(d_{t+1})&= \dfrac{a_t \, i_t}{N} \left(1-\dfrac{I_t}{N}\right),\\[0.1cm]
\bbV(a_{t+1})&= \bbV(i_{t+1}) + \bbV(d_{t+1}) + 
\dfrac{2 \, a_t \, i_t}{N}\left(1-\dfrac{1}{N}\right)^{A_t}\left(\dfrac{N-I_t}{N-1}\right).
\end{align*}}%
We prove that $\bbV(\eta_{t+1}) \cpr \vzero$ as $N \to \infty$, by following a similar line of arguments as in the proof of Lemma~\ref{L: G4}.
\end{proof}


\bibliography{Bib_LLN}
\bibliographystyle{plainnat}

\end{document}